\theoremstyle{plain}
\newtheorem{theorem}{Theorem}[section]
\newtheorem{lemma}[theorem]{Lemma}
\newtheorem{corollary}[theorem]{Corollary}
\newtheorem{proposition}[theorem]{Proposition}
\theoremstyle{definition}
\newtheorem{definition}[theorem]{Definition}
\theoremstyle{remark}
\newtheorem{remark}[theorem]{Remark}
\def\R{\mathbb R}
\def\N{\mathbb N}
\def\I{\mathbb I}
\def\H{\mathcal H}
\def\G{\mathcal G}
\def\M{\mathcal M}
\def\C{\mathcal C}
\def\tto{\rightrightarrows}
\def\prox{\text{prox}}
\def\re{\text{Re}}
\def\zer{\text{zer }}
\def\fix{\text{Fix }}
\def\weakto{\rightharpoonup}
\begin{document}

\articletype{ARTICLE}

\title{The Degenerate Variable Metric Proximal Point Algorithm and Adaptive Stepsizes for Primal-Dual Douglas-Rachford}

\author{
    \name{
        Dirk A. Lorenz\textsuperscript{a}, Jannis Marquardt\textsuperscript{b} and Emanuele Naldi\textsuperscript{a}
        \thanks{
            \textsuperscript{a}Institute for Analysis and Algebra, TU Braunschweig, Germany (d.lorenz@tu-braunschweig.de, e.naldi@tu-braunschweig.de).
        }
        \thanks{
            \textsuperscript{b}Institute for Partial Differential Equations, TU Braunschweig, Germany (j.marquardt@tu-braunschweig.de).
        }
    }
}

\maketitle

\begin{abstract}
In this paper the degenerate preconditioned proximal point algorithm will be combined with the idea of varying preconditioners leading to the degenerate variable metric proximal point algorithm. The weak convergence of the resulting iteration will be proven. From the perspective of the  degenerate variable metric proximal point algorithm, a version of the primal-dual Douglas-Rachford method with varying preconditioners will be derived and a proof of its weak convergence which is based on the previous results for the proximal point algorithm, is provided, too. After that, we derive a heuristic on how to choose those varying preconditioners in order to increase the convergence speed of the method.
\end{abstract}

\begin{keywords}
Preconditioned proximal point algorithm, varying preconditioners,  Douglas–Rachford method, non-stationary primal-dual method, adaptive stepsizes
\end{keywords}

\begin{amscode}
47H05, 65K05, 90C25
\end{amscode}

\section{Introduction}
The performance of first order splitting methods for monotone inclusions often depends critically on stepsize choices, i.e. they perform well for a narrow range of stepsizes, but convergence (although sometimes guaranteed for all positive stepsizes) can slow down considerably for other choices (see, e.g.~\cite{adaptive-stepsizes}). Hence, we are interested in adaptive stepsize choice that may have the ability to automatically find good stepsizes.
In this work we consider specifically the primal-dual Douglas-Rachford (DR) method~\cite{oconnor} and develop an adaptive stepsize. To do so (and also, to prove convergence of our stepsize heuristic) we consider the method as a degenerate preconditioned proximal point iteration~\cite{degenerate-pppa}. This leads to the degenerate variable metric proximal point algorithm and we prove weak convergence for this method. 

We describe the setup of this paper in more detail: Let $\H$ be a real Hilbert space and $T:\H\tto\H$ a (possibly set valued) maximal monotone operator. Formally, a set valued operator $T$ is a map from $\H$ to the power set $2^\H$ and is completely described by its graph $\G[T]$ by the relation $y\in Tx \Leftrightarrow (x,y)\in\G[T]$. A solution of the inclusion problem
\begin{equation}\label{pp-inclusion}
    \text{find }u\in\H \text{ such that } 0\in Tu.
\end{equation}
 is called zero of $T$ and we write $u\in\zer T$. The proximal point algorithm aims to find such a zero by iterating the resolvent $J_T := (\I + T)^{-1}$ of $T$, where $\I$ denotes the identity mapping. Since $T$ is assumed to be maximal monotone, $J_T$ is a full domain and single valued mapping by Minty's surjectivity theorem \cite{minty}. Furthermore, $J_T$ is firmly non-expansive and its fixed points correspond to the zeros of \eqref{pp-inclusion}, i.e. $\fix J_T = \zer T$ (cf. \cite[Section 23]{bauschke}). For every starting point $u^0\in \H$ the sequence defined by the recurrence $u^{k+1} = J_Tu^k$ weakly converges to a solution of \eqref{pp-inclusion} (cf. \cite[Theorem 23.41]{bauschke}).

The computation of $J_T$ is in general an expensive task. In certain situations, this can be changed by using preconditioning. For a linear, self-adjoint and positive-definite map $\M:\H\to\H$, the replacement of the inclusion $0\in Tu$ by $0\in\M^{-1}Tu$ (where $\M^{-1}T$ is the set valued operator which is characterized by $v\in \M^{-1}Tu$ exactly if $\M v\in Tu$) results in the iteration
\[
  u^0\in\H, \quad u^{k+1} = J_{\M^{-1}T}u^k = (\I + \M^{-1}T)^{-1}u^k = (\M+T)^{-1}\M u^k\]

and problem \eqref{pp-inclusion} is equivalent to
\begin{equation}
    \text{find }u\in\H \text{ such that } 0\in (\M+T)^{-1}\M u
\end{equation}
The convergence analysis of this preconditioned proximal point iteration for such preconditioner $\M$ can be accomplished as for the unconditioned iteration after exchanging the inner product $\langle u,v\rangle$ in $\H$ with $\langle u,v\rangle_\M := \langle \M u, v\rangle$. This changes if $\M$ is not positive definite, but only positive semi-definite as has been proposed in~\cite{degenerate-pppa}. For such degenerate preconditioners, $\langle u,v\rangle_\M$ is not necessarily an inner product and $\M^{-1}$ or $(\M+T)^{-1}\M$ may not be single valued. The notion of admissible preconditioners comes into play.

\begin{definition}[Admissible preconditioner]
    A bounded, linear, self-adjoint and positive semi-definite operator $\M: \H\to\H$ is called an admissible preconditioner for $T:\H\tto\H$ if 
    \begin{align*}
    J_T^\M := (\M + T)^{-1}\M
    \end{align*}
    is single valued and has full domain.
\end{definition}

The operator $J_T^\M$ may be interpreted as an instance of the so called warped resolvents (cf. \cite{warped-resolvents}) and in~\cite{degenerate-pppa} it has been shown that the iteration $u^{k+1} = J_{T}^{\M}u^{k}$ does converge weakly to a zero of $T$ if $(\M+T)^{-1}$ is Lipschitz continuous.
In this work we propose a non-stationary variant in which the preconditioner $M$ varies with $k$ (also called \emph{variable metric} method in this context~\cite{burke1999variable-metric-pp}), i.e. we consider a sequence of admissible preconditioners $(\M_k)_{k\in\N}$ which results in the degenerate variable metric proximal point algorithm
\begin{equation}\label{vary-precond-iter}
    u^0\in\H,\quad u^{k+1} = J_T^{\M_k}u^k.
\end{equation}

Degenerate preconditioning is especially effective for splitted inclusion problems as
\begin{equation}\label{dr-inclusion}
    \text{find }x\in\H \text{ such that } 0\in (A+B)x
\end{equation}
for two maximal monotone operators $A,B:\H\tto \H$, and the celebrated DR method (cf.~\cite{lions-mercier, dr-eckstein})
\begin{equation}\label{dr-iteration-unconditioned}
  w^0\in\H,\quad w^{k+1}= w^k + J_{ B}(2J_{ A}w^k - w^k) - J_{A}w^k
\end{equation}
can be seen as an instance of this (see~\cite{procond-dr,degenerate-pppa}). Notice that, if~\eqref{dr-iteration-unconditioned} converges, the iterates $w^{k}$ do not converge to a solution of $0\in(A+B)x$, but the sequence $(J_{A} w^k)_{k\in\N}$ does. The given iteration may be derived from the perspective of the preconditioned proximal point iteration (cf. \cite{dr-eckstein}) and \eqref{dr-iteration-unconditioned} may be seen as a special case of \eqref{vary-precond-iter} for a certain choice of $T$ and $(\M_k)_{k\in\N}$~\cite{degenerate-pppa}.

However, we can as well apply preconditioning to~\eqref{dr-inclusion} and solve the inclusion problem 
\begin{equation}\label{dr-inclusion-precond}
    \text{find }x\in\H \text{ such that } 0\in \Delta(A+B)x,
\end{equation}
where $\Delta:\H\to\H$ is a linear, invertible, bounded, positive semi-definite and self-adjoint preconditioner and naturally, we can also introduce varying preconditioners $(\Delta_k)_{k\in\N}$ here as well, leading us to the varying preconditioned DR iteration
\begin{equation}\label{dr-iteration-precond}
    w^0\in\H,\quad w^{k+1}= w^k + J_{\Delta_k B}(2J_{\Delta_k A}w^k - w^k) - J_{\Delta_k A}w^k.
\end{equation}
Notice that $\Delta_k = \I$ results in the DR method \eqref{dr-iteration-unconditioned}. Furthermore, $\Delta_k \equiv t\,\I$ or $\Delta_k = t_k \I$ allows to introduce a stepsize $t > 0$ or a stepsize sequence $(t_k)_{k\in\N}\subset \R_{>0}^\N$.

To get even more concrete, we consider minimization problems of the form
\begin{equation}\label{primal-problem}
\underset{x\in\R^n}{\text{min}}\, \Big\{f(x) + g(Kx)\Big\},
\end{equation}
for two proper, convex and lower semicontinuous functions $f:\mathbb R^n\to \overline{\mathbb R} := \mathbb R\cup \{\infty\},\, g:\mathbb R^m \to \overline{\mathbb R}$ and $K\in\mathbb R^{m\times n}$. The primal-dual optimality conditions for this problems are (under mild regularity assumptions~\cite{bauschke})
\begin{equation}\label{optimal-first}
    0 \in \partial f( x) + K^T \partial g (K  x)\quad\text{and}\quad 0\in -K \partial f^*(-K^T  y) + \partial g^* (y),
\end{equation}
where the dual variable $y\in\R^m$ is the solution of the dual problem to \eqref{primal-problem}. Furthermore $g^*$ denotes the Fenchel conjugate of $g$, which is
$$g^*(y) := \sup_{z \in \R^m} \langle z,y \rangle - g(z).$$
Both optimality conditions \eqref{optimal-first} can be combined into the single condition
\begin{equation}\label{optimal-splitting}
    0\in \biggl(
    \underbrace{
    \begin{bmatrix}
        \partial f & 0\\ 
        0 & \partial g^*
    \end{bmatrix}
    }_{=: A}
    +
    \underbrace{
    \begin{bmatrix}
        0 & K^T\\
        -K & 0
    \end{bmatrix}
    }_{=:B}
    \biggr)
    \begin{bmatrix}
    x\\y
    \end{bmatrix}.
\end{equation}
The DR method can be applied to this splitting and has been investigated~\cite{oconnor, procond-dr}.

\subsection{Related works}

Proximal point algorithms with non-stationary stepsize are known from a long time \cite{Guler1991,Rockafellar1976}. The idea to change the metric at every iteration comes from other first-order methods, such as gradient descent, where Newton metrics or quasi-Newton metrics can be employed to drastically accelerate convergence~\cite{davidon1959variable}. Examples of non-stationary preconditioned proximal point algorithms can be found in \cite{Parente2008,burke1999variable-metric-pp}, for example, and versions with additional forward term exist as well~\cite{Bonettini2016}.

The stepsize has an important role in splitting methods and it is empirically observed that there often is a ``sweet spot'' for good stepsizes~\cite{adaptive-stepsizes}. While non-stationary methods  have been under investigation in~\cite{combettes2014variable,salzo2017variable,davis2015convergence,Liang2017,adaptive-stepsizes,frankel2015splitting,tran2020non} there is less work on stepsize heuristics and adaptive stepsize selection. 
Some general rules for constant stepsizes are given in~\cite{giselsson2017tightglobalrates,moursi2019douglas} (and these rules are based on further properties of the operators such as strong monotonicity, Lipschitz continuity, and coercivity)
A heuristic stepsize rule for constant stepsizes (motivated by quadratic problems) is derived in~\cite{giselsson2017linear} and a 
self-adaptive stepsize for ADMM (which is equivalent to DR by duality) is proposed in \cite{he2000alternating}.
In~\cite{xu2017adaptive2,xu2017adaptive} the authors proposed adaptive update rules for stepsizes in ADMM based on a spectral estimation.
In \cite{lin2011linearized}, the authors proposed a nonincreasing adaptive rule for the penalty parameter in ADMM. 
Another update rule for ADMM can be found in \cite{song2016fast}.
Adaptive rules for the DR method are scarce and the only work we are aware of (in the context of monotone inclusions) is~\cite{adaptive-stepsizes}.

In this work we apply an adaptive Dogulas-Rachford method to recover points satisfying specific primal-dual optimality conditions that arise from minimization problems involving compositions of convex functions with linear terms and from saddle point problems. In this context, there have been some analysis on how to choose adaptively the stepsizes for the celebrated primal-dual hybrid gradient method, also known as Chambolle-Pock method \cite{CP2011}, namely the works
\cite{Goldstein2013,Yokota2017}. For a variant including forward steps a stepsize heuristic has been proposed in \cite{Vladarean2021}. The recent work \cite{Chambolle2023} expands the analysis of adaptive stepsizes to a stochastic version of the algorithm.

\subsection{Paper organization and contribution}
This paper starts with investigation of the degenerate variable metric proximal point algorithm in Section \ref{section-convergence}. The section's outcome is the proof of weak convergence of \eqref{vary-precond-iter}, which will be accomplished with Theorem \ref{main-convergence-theorem} followed by Corollary \ref{convergence-corollary}. The given proof is inspired by the proceeding in \cite{degenerate-pppa}, combined with ideas from \cite{quasi-fejer, warped-resolvents}.

During the first half of Section \ref{section-dr}, the connection between the preconditioned DR method \eqref{dr-iteration-precond} with the convergence results from Section \ref{section-convergence} will be provided. Therefore, a convergence proof for the varying preconditioned DR method \eqref{dr-iteration-precond} will be given. The second half of Section \ref{section-dr} deals with the application of the DR method to the minimization problem \eqref{primal-problem} using the primal-dual operator splitting \eqref{optimal-splitting}. Furthermore, the idea from \cite{oconnor} to benefit from two instead of one independent stepsizes will be extended from the stationary iterations in \cite{oconnor} to non-stationary iterations.

The newly gained freedom to choose two stepsizes in a varying way naturally leads to the question of how to choose them. An attempt to answer this question will be taken in Section \ref{section-dr-stepsize}, where the idea of adaptive stepsizes for the DR-method from \cite{adaptive-stepsizes} will be extended to two varying stepsize sequences. 

The paper will be completed with numerical examples in which the previously attained rule on how to choose stepsize sequences will be applied to exemplary problems.

\section{The degenerate variable metric proximal point algorithm and its convergence}
\label{section-convergence}
This section will provide the weak convergence of the degenerate variable metric proximal point algorithm \eqref{vary-precond-iter} to a solution of the monotone inclusion \eqref{pp-inclusion}.

In the following $\M,\M_k:\H\to\H$ will always denote linear, bounded and positive semi-definite operators. Therefore, the bilinear form $\langle u, v\rangle_\M := \langle\M u, v\rangle$ is a semi-inner product and the induced semi-norm may be denoted by $\Vert u\Vert_\M :=\langle u, u\rangle_\M^\frac{1}{2}$.

\begin{definition}[$\M$-monotonicity]
    Let $\M: \H\to\H$ be a linear, bounded and positive semi-definite operator. Then $T:\H\tto\H$ is called $\M$-monotone if
    $$\langle u_1 - u_2, v_1 - v_2\rangle_\M\geq 0,\quad\forall (u_1, v_1), (u_2, v_2)\in\G[T].$$
\end{definition}

The $\M$-monotonicity of $\M^{-1} T$ is the key to ensure the equivalent of firmly non-expansiveness in the $\M$-semi-norm context as proven by \cite[Lemma 2.5]{procond-dr}:
\begin{lemma}\label{lemma-expansiveness}
    Let $\M$ be an admissible preconditioner for an operator $T:\H\tto\H$, such that $\M^{-1} T$ is $\M$-monotone. Then $J_T^\M$ is $\M$-firmly non-expansive, i.e. it holds for all $u_1,u_2\in\H$ that
    $$\Vert J_T^\M u_1 - J_T^\M u_2\Vert^2_\M + \Vert (\I - J_T^\M)u_1 - (\I - J_T^\M)u_2\Vert^2_\M \leq \Vert u_1 - u_2 \Vert^2_\M.$$
\end{lemma}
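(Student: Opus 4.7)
The plan is to mimic the classical proof that the resolvent of a monotone operator is firmly non-expansive, but carried out entirely in the $\M$-semi-inner product, with careful attention to the fact that $\M^{-1}$ is only a set-valued placeholder defined via the relation $v\in\M^{-1}Tu \Leftrightarrow \M v\in Tu$.

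First I would unpack the definitions. Set $p_i := J_T^\M u_i$ for $i=1,2$, which by admissibility is well-defined and single valued. By definition of $J_T^\M = (\M+T)^{-1}\M$, we have $\M u_i \in (\M+T)p_i$, hence $\M(u_i-p_i) \in T p_i$. Using the set-valued definition of $\M^{-1}T$, this is exactly $u_i - p_i \in \M^{-1}T p_i$. So the pairs $(p_1, u_1-p_1)$ and $(p_2, u_2-p_2)$ both lie in $\G[\M^{-1}T]$.

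Next I would invoke $\M$-monotonicity of $\M^{-1}T$ applied to these two graph pairs, which yields
\[
  \langle p_1 - p_2,\ (u_1 - p_1) - (u_2 - p_2)\rangle_\M \geq 0.
\]
Finally I would expand $\|u_1-u_2\|_\M^2$ using the decomposition $u_i - u_j = (p_i - p_j) + ((u_i-p_i) - (u_j-p_j))$ and the bilinearity of $\langle \cdot,\cdot\rangle_\M$:
\[
  \|u_1-u_2\|_\M^2 = \|p_1-p_2\|_\M^2 + 2\langle p_1-p_2,\ (u_1-p_1)-(u_2-p_2)\rangle_\M + \|(\I-J_T^\M)u_1-(\I-J_T^\M)u_2\|_\M^2.
\]
Dropping the (nonnegative) cross term by the monotonicity inequality above delivers the claim.

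The only potentially awkward point, which I would flag explicitly, is the first step: because $\M$ is merely positive semi-definite, one cannot manipulate $\M^{-1}$ as an actual operator, and the chain $\M u_i \in (\M+T)p_i \Rightarrow u_i-p_i \in \M^{-1}T p_i$ must be justified from the graph definition of $\M^{-1}T$ rather than from a true inverse. Once this is acknowledged, the remainder is a routine two-line identity in the semi-inner product, and no appeal to positive-definiteness or to the single-valuedness of $\M^{-1}T$ is needed—single-valuedness of $J_T^\M$ (built into admissibility) is enough to make the expression on the left-hand side well defined.
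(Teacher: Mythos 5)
Your proof is correct. The paper does not prove this lemma itself but cites it from the reference on preconditioned Douglas--Rachford ([Lemma 2.5] of that work); your argument --- rewriting $\M u_i \in (\M+T)p_i$ as $(p_i,\,u_i-p_i)\in\G[\M^{-1}T]$ via the graph definition of $\M^{-1}T$, applying $\M$-monotonicity to the two graph pairs, and expanding $\Vert u_1-u_2\Vert_\M^2$ in the semi-inner product before dropping the nonnegative cross term --- is exactly the standard resolvent argument that the cited lemma carries out, and your explicit caveat that $\M^{-1}$ must not be manipulated as a genuine inverse is precisely the right point to flag in the degenerate setting.
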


The preparation of the convergence proof starts with two auxiliary results. The first of them being a version of \cite[Proposition 2.3]{degenerate-pppa} and for which a proof is sketched in the reference.
\begin{proposition}\label{decomposition-proposition}
    Let $\M:\H\to\H$ be a linear, bounded, self-adjoint and positive semi-definite operator. Then there exists a bounded and injective Operator $\C: \mathcal D \to \H$, where $\mathcal D$ is some real Hilbert space, such that $\M= \C\C^*$. Moreover, if $\M$ has closed range, then $\C^*$ is onto.
\end{proposition}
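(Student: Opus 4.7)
The plan is to reduce the factorization to the positive square root. Since $\M$ is bounded, self-adjoint and positive semi-definite, functional calculus (or the Hilbert-space spectral theorem) produces a bounded, self-adjoint, positive semi-definite operator $S:\H\to\H$ with $S^2=\M$; in the sequel I would write $S=\sqrt{\M}$. A first observation that I would record explicitly is the kernel identity $\ker S=\ker\M$: the inclusion $\ker S\subseteq\ker\M$ follows from $\M=S^2$, and conversely $\M u=0$ implies $\|Su\|^2=\langle \M u,u\rangle=0$. Consequently $\overline{\text{Ran}(S)}=(\ker S)^\perp=(\ker\M)^\perp=\overline{\text{Ran}(\M)}$.

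Next I would manufacture injectivity by cutting off the kernel. Set $\mathcal D:=\overline{\text{Ran}(S)}$, a closed subspace of $\H$ and therefore a real Hilbert space with the inherited inner product, and define $\C:\mathcal D\to\H$ as the restriction $S|_{\mathcal D}$. Boundedness is inherited from $S$, and injectivity holds because $\mathcal D\cap\ker S=\{0\}$. To identify $\C^*:\H\to\mathcal D$, note that for $d\in\mathcal D$ and $u\in\H$, self-adjointness of $S$ on $\H$ gives $\langle\C d,u\rangle_\H=\langle Sd,u\rangle_\H=\langle d,Su\rangle_\H$, and since $Su\in\text{Ran}(S)\subseteq\mathcal D$ the vector $Su$ lies in $\mathcal D$, so by the defining property of the adjoint $\C^*u=Su$. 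This immediately yields
\[
  \C\C^*u=S(Su)=S^2u=\M u,
\]
which is the desired factorization.

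For the closed-range addendum, I would exploit the chain $\text{Ran}(\M)\subseteq\text{Ran}(S)\subseteq\overline{\text{Ran}(S)}=\overline{\text{Ran}(\M)}$, where the first inclusion comes from $\M u=S(Su)$ and the equality was established above. If $\text{Ran}(\M)$ is closed then $\overline{\text{Ran}(\M)}=\text{Ran}(\M)$, so all three sets coincide with $\mathcal D$. In particular $\text{Ran}(S)=\mathcal D$, and since $\C^*u=Su$ as a map into $\mathcal D$, $\C^*$ is surjective.

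I do not expect real difficulties: the one place to be careful is to treat $\C^*$ as a map into $\mathcal D$ rather than into $\H$ when computing it from the adjoint identity, so that surjectivity in the closed-range case becomes the clean statement $\text{Ran}(S)=\mathcal D$. Everything else reduces to the standard square-root construction and the symmetry-induced identity $(\ker S)^\perp=\overline{\text{Ran}(S)}$.
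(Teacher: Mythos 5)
Your proof is correct and complete: the square-root construction $S=\sqrt{\M}$, the kernel identity $\ker S=\ker\M$, the restriction $\C=S|_{\overline{\text{Ran}(S)}}$ with $\C^*u=Su$, and the closed-range argument all check out. The paper itself gives no proof here (it defers to the cited reference, where exactly this square-root factorization is sketched), so your argument is essentially the intended one, just written out in full.
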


We will also use the following result (which follows from~\cite[Lemma 5.31]{bauschke} by setting $\epsilon_n \equiv 0$):
\begin{lemma}\label{pre-convergence-lemma}
    Let $(\alpha_k)_{k\in\N}$, $(\beta_k)_{k\in\N}$ and $(m_k)_{k\in\N}$ be sequences in $\R_{\geq 0}$, such that $\sum_{k\in\N} m_k < \infty$. If it holds for all $k\in\N$ that
    \begin{equation}\label{pre-convergence-inequality}
        \alpha_{k+1} \leq (1 + m_k)\alpha_k - \beta_k,
    \end{equation}
    then $(\alpha_k)_{k\in\N}$ converges and $\sum_{k\in\N} \beta_k < \infty$.
\end{lemma}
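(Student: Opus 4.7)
The plan is to reduce the inequality \eqref{pre-convergence-inequality} to the familiar monotone case $\tilde\alpha_{k+1}\leq\tilde\alpha_k-\tilde\beta_k$ by absorbing the multiplicative perturbation $(1+m_k)$ into a rescaling. Since $m_k\geq 0$ and $\sum_k m_k<\infty$, the partial products
\begin{equation*}
P_k := \prod_{j=0}^{k-1}(1+m_j),\qquad P_0:=1,
\end{equation*}
form a nondecreasing sequence bounded above by $\exp\bigl(\sum_{j\in\N} m_j\bigr)<\infty$, hence they converge to some $P_\infty\in[1,\infty)$. In particular $1\leq P_k\leq P_\infty$ for every $k$.

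Next, I would set $\tilde\alpha_k := \alpha_k/P_k$ and $\tilde\beta_k := \beta_k/P_{k+1}$. Dividing \eqref{pre-convergence-inequality} by $P_{k+1}=(1+m_k)P_k$ yields
\begin{equation*}
\tilde\alpha_{k+1} \;\leq\; \tilde\alpha_k - \tilde\beta_k,
\end{equation*}
so $(\tilde\alpha_k)$ is nonincreasing; being nonnegative, it converges to some $\tilde\alpha_\infty\geq 0$, and telescoping gives $\sum_{k\in\N}\tilde\beta_k\leq \tilde\alpha_0<\infty$.

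Finally, I would transfer these conclusions back to the original sequences using the two-sided bound $1\leq P_k\leq P_\infty$. Convergence of $\tilde\alpha_k$ together with convergence of $P_k\to P_\infty>0$ yields convergence of $\alpha_k = P_k\tilde\alpha_k$ to $P_\infty\tilde\alpha_\infty$, while $\beta_k = P_{k+1}\tilde\beta_k\leq P_\infty\tilde\beta_k$ gives $\sum_{k\in\N}\beta_k<\infty$.

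I do not anticipate any real obstacle here; the only point that requires a little care is verifying that $\prod_k(1+m_k)$ converges to a \emph{strictly positive} finite limit so that the rescaling is invertible and the inequalities transfer in both directions. This is standard and follows immediately from $\log(1+m_k)\leq m_k$ together with the summability hypothesis.
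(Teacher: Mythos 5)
Your proof is correct. Note that the paper does not actually prove this lemma at all: it simply invokes \cite[Lemma~5.31]{bauschke} with $\epsilon_n\equiv 0$, so any self-contained argument is by definition a different route. Your rescaling by the partial products $P_k=\prod_{j<k}(1+m_j)$ is the standard way to prove that cited result: summability of $(m_k)$ gives $1\leq P_k\leq\exp\bigl(\sum_j m_j\bigr)$ via $\log(1+m_k)\leq m_k$, the substitution $\tilde\alpha_k=\alpha_k/P_k$, $\tilde\beta_k=\beta_k/P_{k+1}$ turns \eqref{pre-convergence-inequality} into the monotone inequality $\tilde\alpha_{k+1}\leq\tilde\alpha_k-\tilde\beta_k$, and the two-sided bound on $P_k$ (together with $P_k\to P_\infty\geq 1>0$) transfers convergence of $\tilde\alpha_k$ and summability of $\tilde\beta_k$ back to the original sequences. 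Every step checks out; what your version buys is a short, elementary, self-contained proof in place of an external reference, at the cost of a few lines of bookkeeping.
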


Now we state the main lemma:
\begin{lemma}\label{preparing-lemma}
    Let $ T:\H\tto\H$ such that $\zer T \neq \emptyset$. Assume all $\M_k$ to be admissible preconditioners for $ T$ which satisfy
    $$\M_k \to \M,\quad\quad \sum_{k\in\N}\Vert \M_{k+1} - \M_k\Vert < \infty.$$
    Assume for all $k\in\N$ that $\M_k^{-1} T$ are $\M_k$-monotone and $(\M_k +  T)^{-1}$ are $L$-Lipschitz. Let $(u^k)_{k \in\N}$ be generated by
    $$u^0 \in \H,\quad u^{k+1} = J_{ T}^{\M_k} u^k.$$
    Then $(u^k)_{k\in\N}$ is bounded and $(\Vert u^k - u^*\Vert_{\M_k})_{k\in\N}$ converges for all $u^*\in\fix J_{ T}^\M$. Furthermore,
    \begin{align*}
      \sum\limits_{k=0}^{\infty}\Vert J_{ T}^{\M_k} u^k - u^k\Vert_{\M_k} < \infty,
    \end{align*}
    i.e. $\lim_{k\to\infty}\Vert J_{ T}^{\M_k} u^k - u^k\Vert_{\M_k} = 0$.
\end{lemma}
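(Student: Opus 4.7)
The plan is to reduce everything to the quasi-Fej\'er type recursion handled by Lemma~\ref{pre-convergence-lemma}. First, I would note that any $u^{*}\in\fix J_{T}^{\M}$ is in fact a common fixed point of the whole family $J_{T}^{\M_k}$: since $J_{T}^{\M}$ is single valued, the equality $u^{*}=(\M+T)^{-1}\M u^{*}$ forces $0\in Tu^{*}$, and running the argument in reverse with $\M$ replaced by $\M_{k}$ shows $u^{*}\in\fix J_{T}^{\M_{k}}$ for every $k$. Applying Lemma~\ref{lemma-expansiveness} to the pair $(u^{k},u^{*})$ in the $\M_{k}$-semi-inner product then yields the one-step dissipation
\begin{equation*}
    \Vert u^{k+1}-u^{*}\Vert_{\M_{k}}^{2}+\Vert u^{k+1}-u^{k}\Vert_{\M_{k}}^{2}\leq \Vert u^{k}-u^{*}\Vert_{\M_{k}}^{2}.
\end{equation*}

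The delicate step, where I expect the main difficulty, is chaining these inequalities across a varying metric, i.e.\ passing from $\Vert\cdot\Vert_{\M_{k}}$ to $\Vert\cdot\Vert_{\M_{k+1}}$. Expanding
\begin{equation*}
    \Vert u^{k+1}-u^{*}\Vert_{\M_{k+1}}^{2}-\Vert u^{k+1}-u^{*}\Vert_{\M_{k}}^{2}=\langle (\M_{k+1}-\M_{k})(u^{k+1}-u^{*}),u^{k+1}-u^{*}\rangle\leq \Vert \M_{k+1}-\M_{k}\Vert\,\Vert u^{k+1}-u^{*}\Vert^{2}
\end{equation*}
reduces the task to bounding the ambient norm $\Vert u^{k+1}-u^{*}\Vert$ by a multiple of the semi-norm $\Vert u^{k}-u^{*}\Vert_{\M_{k}}$. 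For this I would invoke the decomposition $\M_{k}=\C_{k}\C_{k}^{*}$ from Proposition~\ref{decomposition-proposition} together with $u^{*}\in\zer T$ and the $L$-Lipschitz hypothesis on $(\M_{k}+T)^{-1}$, which give
\begin{equation*}
    \Vert u^{k+1}-u^{*}\Vert=\Vert (\M_{k}+T)^{-1}\M_{k}u^{k}-(\M_{k}+T)^{-1}\M_{k}u^{*}\Vert\leq L\Vert \M_{k}(u^{k}-u^{*})\Vert\leq L\Vert \C_{k}\Vert\,\Vert u^{k}-u^{*}\Vert_{\M_{k}},
\end{equation*}
where the last inequality uses $\Vert \C_{k}\C_{k}^{*}v\Vert\leq \Vert \C_{k}\Vert\,\Vert \C_{k}^{*}v\Vert$ and $\Vert \C_{k}^{*}v\Vert=\Vert v\Vert_{\M_{k}}$. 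Since $\Vert \C_{k}\Vert^{2}=\Vert \M_{k}\Vert$ and $(\Vert \M_{k}\Vert)$ is uniformly bounded (as $\M_{k}\to\M$), there is a constant $C>0$ independent of $k$ with $\Vert u^{k+1}-u^{*}\Vert^{2}\leq C\,\Vert u^{k}-u^{*}\Vert_{\M_{k}}^{2}$.

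Combining the three estimates produces the quasi-Fej\'er recursion
\begin{equation*}
    \Vert u^{k+1}-u^{*}\Vert_{\M_{k+1}}^{2}\leq (1+m_{k})\Vert u^{k}-u^{*}\Vert_{\M_{k}}^{2}-\Vert u^{k+1}-u^{k}\Vert_{\M_{k}}^{2},
\end{equation*}
with $m_{k}:=C\,\Vert \M_{k+1}-\M_{k}\Vert$ summable by hypothesis. Lemma~\ref{pre-convergence-lemma} then simultaneously gives convergence of $(\Vert u^{k}-u^{*}\Vert_{\M_{k}}^{2})$ and summability of $(\Vert u^{k+1}-u^{k}\Vert_{\M_{k}}^{2})$; in particular $\Vert J_{T}^{\M_{k}}u^{k}-u^{k}\Vert_{\M_{k}}\to 0$. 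Boundedness of $(u^{k})$ finally follows by plugging the convergent (hence bounded) sequence $(\Vert u^{k-1}-u^{*}\Vert_{\M_{k-1}}^{2})$ back into the Lipschitz comparison $\Vert u^{k}-u^{*}\Vert^{2}\leq C\,\Vert u^{k-1}-u^{*}\Vert_{\M_{k-1}}^{2}$. Conceptually, the whole argument hinges on the second paragraph: without the $L$-Lipschitz continuity of $(\M_{k}+T)^{-1}$ together with the uniform bound on $\Vert \M_{k}\Vert$, one cannot convert the ambient-norm perturbation coming from the change of metric into a genuine multiplicative term in the semi-norm, and the quasi-Fej\'er machinery would not close.
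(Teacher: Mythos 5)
Your proposal is correct and follows essentially the same route as the paper's proof: the decomposition $\M_k=\C_k\C_k^*$ plus the $L$-Lipschitz hypothesis to bound $\Vert u^{k+1}-u^*\Vert$ by $C\Vert u^k-u^*\Vert_{\M_k}$, the $\M_k$-firm non-expansiveness from Lemma~\ref{lemma-expansiveness}, the $\Vert\M_{k+1}-\M_k\Vert$ estimate for the change of metric, and finally Lemma~\ref{pre-convergence-lemma} applied to the resulting quasi-Fej\'er recursion. The only cosmetic differences are the order in which the estimates are assembled and your explicit justification that fixed points of $J_T^{\M}$ are common fixed points of all $J_T^{\M_k}$, which the paper uses without comment.
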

\begin{proof}
    Let $\M_k = \C_k\C^*_k$ be a decomposition of $\M_k$ according to Proposition \ref{decomposition-proposition}. Since all $(\M_k +  T)^{-1}$ are $L$-Lipschitz, it holds for all $u, \tilde u \in \H$ and $k\in\N$ that
    \begin{equation}\label{conv-thm-lipschitz}
        \begin{split}
            \Vert J^{\M_k}_{ T} u - J^{\M_k}_{ T} \tilde u \Vert
            &= \Vert (\M_k +  T)^{-1}\C_k\C^*_k u - (\M_k +  T)^{-1}\C_k\C_k^* \tilde u\Vert\\
            &\leq L \Vert \C_k\Vert \Vert \C^*_k (u - \tilde u)\Vert\\
            &= L \sqrt{\Vert \C_k\Vert^2} \langle \C^*_k(u - \tilde u), \C^*_k(u-\tilde u)\rangle^{\frac{1}{2}} \\
            &= L \sqrt{\Vert \C_k\C_k^*\Vert} \langle \M_k(u - \tilde u),u-\tilde u\rangle^{\frac{1}{2}} \\
            &= L \sqrt{\Vert \M_k\Vert} \Vert u - \tilde u\Vert_{\M_k}.
        \end{split}
    \end{equation}
    Since $(\M_k)_{k\in\N}$ is convergent, $\Vert\M_k\Vert$ are bounded and there exists $C > 0$, such that
    \begin{equation}\label{conv-thm-lip-const}
        \Vert J^{\M_k}_{ T} u - J^{\M_k}_{ T} \tilde u \Vert \leq C \Vert u - \tilde u\Vert_{\M_k}.
    \end{equation}
    The combination of this inequality with the $\M_k$-firmly-non-expansiveness of $J_T^{\M_k}$ provided by Lemma \ref{lemma-expansiveness} yields for all $u^* \in\fix J_T^\M = \fix J_T^{\M_k}$ that
    \begin{alignat*}{3}\label{conv-thm-long-eqn}
        \Vert &&u^{k+1} - u^*&\Vert^2_{\M_{k+1}} = \Vert J_T^{\M_k}u^k - J_T^{\M_k}u^*\Vert^2_{\M_{k+1}}\\
        &&=\quad &\Vert J_T^{\M_k}u^k - J_T^{\M_k}u^*\Vert^2_{\M_k} + \Vert J_T^{\M_k}u^k - J_T^{\M_k}u^*\Vert^2_{\M_{k+1} - \M_k}\\
        && \overset{\ref{lemma-expansiveness}}{\leq} \;\;&\Vert u^k - u^* \Vert^2_{\M_k} - \Vert J_{ T}^{\M_k} u^k - u^k\Vert^2_{\M_k} + \Vert J_T^{\M_k}u^k - J_T^{\M_k}u^*\Vert^2_{\M_{k+1} - \M_k}\\
        &&\leq\quad &\Vert u^k - u^* \Vert^2_{\M_k} - \Vert J_{ T}^{\M_k} u^k - u^k\Vert^2_{\M_k} + \Vert \M_{k+1}-\M_k\Vert \Vert J_T^{\M_k}u^k - J_T^{\M_k}u^*\Vert^2\\
        && \overset{\eqref{conv-thm-lip-const}}{\leq}\quad &\Vert u^k - u^* \Vert^2_{\M_k} - \Vert J_{ T}^{\M_k} u^k - u^k\Vert^2_{\M_k} + C^2 \Vert \M_{k+1}-\M_k\Vert \Vert u^k - u^* \Vert^2_{\M_k}\\
        &&= \quad&\left(1 + C^2 \Vert \M_{k+1}-\M_k\Vert\right) \Vert u^k - u^* \Vert^2_{\M_k}- \Vert J_{ T}^{\M_k} u^k - u^k\Vert^2_{\M_k}.
    \end{alignat*}
    An application of Lemma \ref{pre-convergence-lemma} with $\alpha_k =  \Vert u^k - u^* \Vert^2_{\M_k}$, $\beta_k = \Vert J_{ T}^{\M_k} u^k - u^k\Vert^2_{\M_k}$ and $m_k = C^2 \Vert \M_{k+1}-\M_k\Vert$ yields the convergence of $(\Vert u^k - u^* \Vert_{\M_k})_{k\in\N}$ as well as the summability condition $\sum\limits_{k=1}^{\infty}\Vert J_{ T}^{\M_k} u^k - u^k\Vert^2_{\M_k} < \infty$.
    
    The convergence also implies the boundedness of $(\Vert u^k - u^* \Vert^2_{\M_k})_{k\in\N}$ and since \eqref{conv-thm-lip-const} also yields
    $$\Vert u^{k+1} - u^* \Vert \leq C \Vert u^k - u^*\Vert_{\M_k},$$
    the sequence $(u^k)_{k\in\N}$ is bounded, too.
\end{proof}

\begin{theorem}\label{main-convergence-theorem}
    Let $ T:\H\tto\H$ such that $\zer T \neq \emptyset$. Assume all $\M_k$ and $\M$ to be admissible preconditioners for $ T$ which satisfy
    $$\M_k \to \M,\quad\quad \sum_{k\in\N}\Vert \M_{k+1} - \M_k\Vert < \infty.$$
    Furthermore, assume for all $k\in\N$ that $\M_k^{-1} T$ are $\M_k$-monotone and $(\M_k +  T)^{-1}$ are $L$-Lipschitz. Let $(u^k)_{k \in\N}$ be generated by
    $$u^0 \in \H,\quad u^{k+1} = J_{ T}^{\M_k} u^k.$$
    If every weak cluster point of $(u^k)_{k\in\N}$ is a fixed point of $J_T^\M$, the sequence $(u^k)_{k\in\N}$ converges weakly to some $u \in \zer  T$.
\end{theorem}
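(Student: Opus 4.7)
The plan is to upgrade the conclusions of Lemma \ref{preparing-lemma} to a full Opial-type weak convergence statement, with the extra twist that Opial has to be applied in a seminorm rather than a norm. Lemma \ref{preparing-lemma} already gives that $(u^k)_{k\in\N}$ is bounded and that $(\Vert u^k - u^*\Vert_{\M_k})_{k\in\N}$ converges for every $u^* \in \fix J_T^\M$. A first short step replaces $\M_k$ by the limit $\M$: since $\Vert \M_k - \M\Vert \to 0$ and $(u^k)$ is bounded, the difference $\Vert u^k - u^*\Vert_{\M_k}^2 - \Vert u^k - u^*\Vert_\M^2 = \langle (\M_k - \M)(u^k - u^*), u^k - u^*\rangle$ vanishes, so $(\Vert u^k - u^*\Vert_\M)_{k\in\N}$ converges as well.

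Next I would run the textbook Opial argument in the $\M$-seminorm. Given two weak cluster points $u_1, u_2$, the assumption places them both in $\fix J_T^\M$, so the previous step guarantees that $\Vert u^k - u_i\Vert_\M$ converges for $i = 1, 2$. Expanding $\Vert u^k - u_1\Vert_\M^2 - \Vert u^k - u_2\Vert_\M^2$ as a linear functional of $u^k$ plus constants and evaluating along the respective subsequences yields $\langle u_1 - u_2, \M(u_1 - u_2)\rangle = 0$, i.e.\ $\M u_1 = \M u_2$. In a non-degenerate metric this would be the end of the story, but here it only forces $u_1$ and $u_2$ to have the same $\M$-image.

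The main obstacle, and the only genuinely non-classical part of the argument, is to promote $\M u_1 = \M u_2$ to $u_1 = u_2$. I would handle this by invoking the Lipschitz hypothesis on $(\M + T)^{-1}$ one additional time (beyond its use inside Lemma \ref{preparing-lemma}): any Lipschitz mapping is single-valued, so for each $u_i \in \fix J_T^\M$ the identity $u_i = J_T^\M u_i = (\M + T)^{-1}(\M u_i)$ determines $u_i$ unambiguously from $\M u_i$. Hence $\M u_1 = \M u_2$ forces $u_1 = u_2$. Boundedness together with uniqueness of the weak cluster point then yields the claimed weak convergence, with the limit lying in $\fix J_T^\M = \zer T$ by hypothesis.
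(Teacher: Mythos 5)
Your argument is correct and follows essentially the same route as the paper: both proofs run the Opial-type uniqueness argument in the degenerate $\M$-geometry (the paper phrases it via the convergent quantity $\langle \M_k u^k, u^* - u^{**}\rangle$ and the $k$-dependent seminorms rather than first passing to the fixed seminorm $\Vert\cdot\Vert_\M$, but this is the same computation) and then promote $\M u_1 = \M u_2$ to $u_1 = u_2$ exactly as you do, via $u_i = J_T^\M u_i = (\M+T)^{-1}\M u_i$. One small correction to your last step: the required single-valuedness should be sourced from the standing assumption that $\M$ is an admissible preconditioner (which makes $J_T^\M = (\M+T)^{-1}\M$ single valued), not from a Lipschitz hypothesis on $(\M+T)^{-1}$ --- the theorem only assumes the operators $(\M_k+T)^{-1}$ to be $L$-Lipschitz, and Lipschitz continuity of the limit operator $(\M+T)^{-1}$ is never stated.
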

\begin{proof}
    Let $u^*$ be a fixed point of $J_T^\M$. Then, according to Lemma \ref{preparing-lemma}, the sequence $(\Vert u^{k} - u^* \Vert^2_{\M_k})_{k\in\N}$ is convergent.\\
    Since Lemma \ref{preparing-lemma} also provides the boundedness of $(u^k)_{k\in\N}$, this sequence has at least one weak cluster point $u^* \in\fix J_T^\M$ by assumption. Let $u^{**}$ to be another cluster point of $(u^k)_{k\in\N}$ with $u^*\neq u^{**}$. Both $u^*$ and $u^{**}$ are fixed points of all $J_T^{\M_k}$ and $J_T^{\M}$. Furthermore, there exist subsequences $(u^{k_i})_{i\in\N}$ and $(u^{k_j})_{j\in\N}$, which converge weakly to $u^*$ and $u^{**}$, i.e. $u^{k_i}\weakto u^*$ and $u^{k_j}\weakto u^{**}$.\\
    Consider the inner product
    \begin{equation}\label{conv-thm-inner-prod}
        \langle \M_k u^k, u^* - u^{**}\rangle = \frac{1}{2} \big(\Vert u^* - u^{**}\Vert^2_{\M_k} - \Vert u^k - u^* \Vert_{\M_k}^2+ \Vert u^*\Vert^2_{\M_k} - \Vert u^{**}\Vert^2_{\M_k} \big).
    \end{equation}
    The convergence of the second norm on the right hand side of \eqref{conv-thm-inner-prod} is already proven in Lemma \ref{preparing-lemma}. The remaining norms converge by the assumption of $\M_k \to \M$. Hence, both the right hand side and the inner product on the left hand side converge. The attempt of calculating this limit for both subsequences $(u^{k_i})_{i\in\N}$ and $(u^{k_j})_{j\in\N}$ results under usage of $u^{k_i} \weakto u^*$, $u^{k_j} \weakto u^{**}$ and $\M_k (u^* - u^{**})\to \M(u^*-u^{**})$ in two limits
    \begin{equation*}
        \begin{split}
            \ell &= \lim_{i\to\infty} \langle \M_{k_i} u^{k_i}, u^* - u^{**}\rangle= \lim_{i\to\infty} \langle u^{k_i}, \M_{k_i}( u^* - u^{**})\rangle = \langle u^*, u^* - u^{**}\rangle_\M,\\
            \tilde \ell &= \lim_{j\to\infty} \langle \M_{k_j} u^{k_j}, u^* - u^{**}\rangle = \lim_{j\to\infty} \langle u^{k_j}, \M_{k_j}( u^* - u^{**})\rangle = \langle u^{**}, u^* - u^{**}\rangle_\M.
        \end{split}
    \end{equation*}
    The uniqueness of the limit enforces $\ell = \tilde \ell$ and therefore
    $$0 = \ell - \tilde \ell = \langle u^* - u^{**}, u^* - u^{**}\rangle_\M = \Vert u^* - u^{**}\Vert^2_\M.$$
    This implies $\M u^* = \M u^{**}$ and in particular
    \begin{equation*}
            u^* = J_T^\M u^* = (\M +  T)^{-1}\M u^* = (\M +  T)^{-1}\M u^{**}= J_T^\M u^{**} = u^{**}.
    \end{equation*}
    Thus, all weak cluster points of $(u^k)_{k\in\N}$ coincide and $(u^k)_{k\in\N}$ converges weakly to a fixed point of $J_{ T}^\M$.\\
\end{proof}

\begin{corollary}\label{convergence-corollary}
    Let $ T:\H\tto\H$ be a maximal monotone operator such that $\zer  T  \neq \emptyset$. Assume all $\M_k$ and $\M$ to be admissible preconditioners for $ T$, which satisfy
    $$\M_k \to \M,\quad\quad \sum_{k\in\N}\Vert \M_{k+1} - \M_k\Vert < \infty.$$
    Furthermore, assume for all $k\in\N$ that $(\M_k +  T)^{-1}$ are $L$-Lipschitz. Let $(u^k)_{k \in\N}$ be generated by
    $$u^0 \in \H,\quad u^{k+1} = J_{ T}^{\M_k} u^k.$$
    Then $(u^k)_{k\in\N}$ converges weakly to some $u \in \zer  T$.
\end{corollary}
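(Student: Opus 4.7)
The plan is to derive this corollary directly from Theorem \ref{main-convergence-theorem} by establishing its two hypotheses that are not explicitly listed here: (i) the $\M_k$-monotonicity of $\M_k^{-1}T$, and (ii) that every weak cluster point of $(u^k)_{k\in\N}$ is a fixed point of $J_T^\M$. Condition (i) is immediate from the monotonicity of $T$: for any $(u_i,v_i)\in\G[\M_k^{-1}T]$ the defining relation $\M_k v_i\in Tu_i$ gives $\langle u_1-u_2,v_1-v_2\rangle_{\M_k}=\langle u_1-u_2,\M_k v_1-\M_k v_2\rangle\geq 0$, with no need for maximality here.

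For (ii), I would first note that $\fix J_T^\M=\zer T$, since single-valuedness of $J_T^\M$ makes the equation $\bar u=J_T^\M\bar u$ equivalent to $0\in T\bar u$. Unfolding the iteration $u^{k+1}=(\M_k+T)^{-1}\M_k u^k$ yields
\[
v^{k+1}:=\M_k(u^k-u^{k+1})\in Tu^{k+1},
\]
so each pair $(u^{k+1},v^{k+1})$ lies in $\G[T]$. Using the decomposition $\M_k=\C_k\C_k^*$ from Proposition \ref{decomposition-proposition} together with the standard identity $\|\C_k\|^2=\|\M_k\|$, one gets the estimate
\[
\|v^{k+1}\|\leq \|\C_k\|\,\|\C_k^*(u^k-u^{k+1})\|=\sqrt{\|\M_k\|}\,\|u^k-u^{k+1}\|_{\M_k}.
\]
Since $\M_k\to\M$, the sequence $\|\M_k\|$ is bounded, and Lemma \ref{preparing-lemma} gives $\|u^{k+1}-u^k\|_{\M_k}\to 0$; consequently $v^{k+1}\to 0$ in norm.

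To conclude (ii), let $\bar u$ be any weak cluster point of $(u^k)_{k\in\N}$, realised along a subsequence $u^{k_j}\weakto\bar u$. Then $(u^{k_j},v^{k_j})\in\G[T]$ with $u^{k_j}\weakto\bar u$ and $v^{k_j}\to 0$ strongly; the weak-strong sequential closedness of the graph of the maximal monotone operator $T$ then forces $(\bar u,0)\in\G[T]$, i.e.\ $\bar u\in\zer T=\fix J_T^\M$. With (i) and (ii) in hand, Theorem \ref{main-convergence-theorem} delivers the claimed weak convergence of $(u^k)_{k\in\N}$ to some $u\in\zer T$.

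The step I expect to be the main obstacle is upgrading the $\M_k$-semi-norm decay provided by Lemma \ref{preparing-lemma} to strong convergence of the residuals $v^{k+1}=\M_k(u^k-u^{k+1})$: because $\M_k$ is only positive semi-definite, $\|\cdot\|_{\M_k}$ is degenerate and this upgrade must be obtained by routing through the $\C_k\C_k^*$ decomposition of Proposition \ref{decomposition-proposition} and the uniform bound on $\|\C_k\|$. Once this is in place, the graph-closure property of maximal monotone operators, which is the precise point at which maximality (not just monotonicity) is used, finishes the argument.
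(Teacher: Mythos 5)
Your proposal is correct and follows essentially the same route as the paper: deduce $\M_k$-monotonicity of $\M_k^{-1}T$ from monotonicity of $T$, use Lemma \ref{preparing-lemma} together with the factorization $\M_k=\C_k\C_k^*$ and the boundedness of $\Vert\C_k\Vert$ to show the residuals $\M_k(u^k-u^{k+1})\in Tu^{k+1}$ converge strongly to $0$, and invoke the weak-strong closedness of the graph of the maximal monotone $T$ before applying Theorem \ref{main-convergence-theorem}. The only difference is cosmetic: you spell out the estimate $\Vert\M_k(u^k-u^{k+1})\Vert\leq\sqrt{\Vert\M_k\Vert}\,\Vert u^k-u^{k+1}\Vert_{\M_k}$ and the identification $\fix J_T^\M=\zer T$, which the paper leaves implicit.
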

\begin{proof}
    Let $(u, v),(\tilde u,\tilde v)\in \G[\M_k^{-1} T]$ and consequently $(u,\M_k v),(\tilde u, \M_k \tilde v)\in\G[T]$. The monotonicity of $ T$ shows that
    $$\langle v - \tilde v, u - \tilde u\rangle_{\M_k} = \langle \M_k v - \M_k \tilde v , u - \tilde u \rangle \geq 0.$$
    Hence, $\M_k^{-1} T$ are $\M_k$-monotone. Now, the claim follows with Theorem \ref{main-convergence-theorem} if every weak cluster point of $(u^k)_{k\in\N}$ is a fixed point of $J_T^\M$. Hence, assume $u^{k_{i}+1} = J_T^{\M_{k_i}}u^{k_i}\weakto u\in\H$. Lemma \ref{preparing-lemma} provides the convergence
    $$\lim_{i\to\infty} \Vert J_T^{\M_{k_i}} u^{k_i} - u^{k_i}\Vert_{\M_{k_i}} = 0,$$
    which (together with boundness of $\|\C_{k_i}\|$) implies that
    $$ T J_T^{\M_{k_i}}u^{k_i}\ni\M_{k_i}(u^{k_i} - J^{\M_{k_i}}_{ T} u^{k_i})\to 0.$$
    The maximality of $ T$ enforces $ T$ to be closed in $\H_{\text{weak}} \times \H_{\text{strong}}$ (see \cite[Proposition 20.38]{bauschke}). Hence, we have $0\in T u$ or equally $u \in \zer  T$.\\
\end{proof}

\section{Preconditioned Douglas-Rachford method}\label{section-dr}
\subsection{Douglas-Rachford method as degenerate variable metric proximal point algorithm}
For the following derivation of the DR method from the perspective of the degenerate variable metric proximal point algorithm, a more general version of the well known Moreau decomposition  will be used.
\begin{proposition}[Generalized Moreau decomposition]\label{general-moreau-decomposition}
    Let $T:\H\tto\H$ be a set valued operator and $\Sigma:\H\to\H$ linear and invertible, such that $J_{\Sigma^{-1}T}$ and $J_{\Sigma T^{-1}}$ are everywhere defined and single valued. Then it holds that
    $$J_{\Sigma T^{-1}}(x) = x - \Sigma J_{\Sigma^{-1}T}(\Sigma^{-1}x).$$
\end{proposition}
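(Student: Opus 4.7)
The plan is to prove the identity by a direct unfolding of the defining relation of the resolvent, using the elementary equivalence $(u,v)\in\G[T]\Leftrightarrow (v,u)\in\G[T^{-1}]$ to pivot between $T$ and $T^{-1}$, and exploiting the single-valuedness hypothesis to conclude equality rather than just inclusion.

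Concretely, I would first set $y := J_{\Sigma^{-1}T}(\Sigma^{-1}x)$ (which is well defined and single valued by assumption) and denote the right hand side by $z := x - \Sigma y$. Unfolding the defining relation $y = (\I+\Sigma^{-1}T)^{-1}(\Sigma^{-1}x)$ gives $\Sigma^{-1}x - y \in \Sigma^{-1}Ty$, i.e.\
\begin{equation*}
  x - \Sigma y \;\in\; T y,
\end{equation*}
so that $z \in Ty$. This is the only place where the structure of $\Sigma^{-1}T$ is used, and since $\Sigma$ is linear and invertible the manipulation is just left-multiplication by $\Sigma$.

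Next I would translate this into a statement about $T^{-1}$: from $z \in Ty$ we obtain $y \in T^{-1}z$, hence $\Sigma y \in \Sigma T^{-1} z$, which (substituting $\Sigma y = x - z$) reads $x - z \in \Sigma T^{-1} z$, i.e.\ $x \in (\I + \Sigma T^{-1})z$. Therefore $z$ belongs to the preimage $(\I + \Sigma T^{-1})^{-1}(x)$. Since $J_{\Sigma T^{-1}}$ is assumed to be single valued and everywhere defined, this preimage is exactly the singleton $\{J_{\Sigma T^{-1}}(x)\}$, yielding $z = J_{\Sigma T^{-1}}(x)$, which is the claimed decomposition.

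The calculation itself is routine; the only point that requires care is that one should not appeal to monotonicity or to a Moreau–Yosida variational characterization, because $T$ here is only assumed to be a set-valued operator. The hypothesis that both $J_{\Sigma^{-1}T}$ and $J_{\Sigma T^{-1}}$ are single valued with full domain is doing all of the heavy lifting: it guarantees that the two inclusions I derive can be upgraded to equalities, and it is also what lets me invoke $J_{\Sigma^{-1}T}(\Sigma^{-1}x)$ to begin with. I do not expect any genuine obstacle beyond bookkeeping the order in which $\Sigma$ and $\Sigma^{-1}$ act.
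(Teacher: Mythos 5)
Your argument is correct and is essentially the paper's own proof run in the opposite direction: the paper starts from $y=J_{\Sigma T^{-1}}(x)$ and unwinds the same chain of equivalences to land in the set $\Sigma J_{\Sigma^{-1}T}(\Sigma^{-1}x)$, invoking single-valuedness of $J_{\Sigma^{-1}T}$ at the end, whereas you start from $J_{\Sigma^{-1}T}(\Sigma^{-1}x)$ and invoke single-valuedness of $J_{\Sigma T^{-1}}$. The substance --- unfolding the resolvent definitions, flipping between $\G[T]$ and $\G[T^{-1}]$, and using invertibility of $\Sigma$ --- is identical, so no further comparison is needed.
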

\begin{proof}
    
     Let $y= J_{\Sigma T^{-1}}(x) = (\I + \Sigma T^{-1})^{-1}x$, then
    \begin{alignat*}{3}
        && (\I + \Sigma T^{-1})^{-1}x &= y\\
        \Leftrightarrow && x &\in y + \Sigma T^{-1}y\\
        \Leftrightarrow && y &\in T( \Sigma^{-1}(x-y))\\
        \Leftrightarrow && \Sigma^{-1}x &\in \Sigma^{-1}T( \Sigma^{-1}(x-y)) + \Sigma^{-1}(x - y)\\
        \Leftrightarrow && \Sigma^{-1}x &\in (\I + \Sigma^{-1}T)(\Sigma^{-1}(x-y))\\
        \Leftrightarrow && x-y &\in \Sigma J_{\Sigma^{-1}T}(\Sigma^{-1}x).
    \end{alignat*}
   Since $J_{\Sigma^{-1} T}$ is single valued, then the last line is an equality and the claim follows.
\end{proof}

Some $x\in\H$ satisfies the DR inclusion problem \eqref{dr-inclusion} if there exists $y\in\H$, such that
$$y\in Bx,\quad 0\in Ax + y.$$
The first inclusion is equivalent to $0\in -x + B^{-1}y$ (where $B^{-1}$ does always exist in the set valued sense). Thus, both inclusions can be written using a block operator as
\begin{equation}
    \begin{bmatrix}0\\0\end{bmatrix}\in
    \begin{bmatrix}
        A & \I\\
        -\I & B^{-1}
    \end{bmatrix}
    \begin{bmatrix}x\\y\end{bmatrix}
    =: T u,
\end{equation}
where $T$ is defined on $\H\times\H$.
Let $(\Delta_k)_{k\in\N}$ be a sequence of linear, invertible, bounded, positive semi-definite and self-adjoint operators on $\H$. The proximal point iteration \eqref{vary-precond-iter} for $T:\H\times\H\tto\H\times\H$, combined with the varying preconditioner
\begin{equation}\label{varyig-preconditioner}
    \M_k := \begin{bmatrix}
        \Delta_k^{-1} & -\I\\
        -\I & \Delta_k
    \end{bmatrix},
\end{equation}
is
\begin{alignat}{3}\label{reformulation-one}
    &&u^{k+1} &= (\M_k + T)^{-1}\M_k u^k\\
    \Leftrightarrow&&\quad \M_k u^k &\in (\M_k + T)\, u^{k+1}\\
    \Leftrightarrow&&\quad \begin{bmatrix} 
    \Delta_k^{-1} & - \I\\ - \I & \Delta_k
    \end{bmatrix}
    \begin{bmatrix}
    x^k\\y^k
    \end{bmatrix}
    &\in
    \begin{bmatrix}
    \Delta_k^{-1} + A & 0 \\ -2\I & \Delta_k  + B^{-1}
    \end{bmatrix}
    \begin{bmatrix}
    x^{k+1}\\ y^{k+1}
    \end{bmatrix}\\
    \Leftrightarrow&&\quad
    \begin{bmatrix}
    x^k - \Delta_k y^k\\
     - \Delta_k^{-1}(x^k - \Delta_ky^k)
    \end{bmatrix}
    &\in
    \begin{bmatrix}
    (\I + \Delta_k A)x^{k+1}\\
    -2\Delta_k^{-1}x^{k+1} + (\I + \Delta_k^{-1} B^{-1})y^{k+1}
    \end{bmatrix}
\end{alignat}

Under the assumption that $J_{\Delta_kA}$, $J_{\Delta_k^{-1}B^{-1}}$ and $J_{\Delta_kB}$ are defined everywhere and single valued, it follows that
\begin{equation}
    \begin{split}
        &\begin{bmatrix}
            x^{k+1}\\y^{k+1}
        \end{bmatrix}
        = \begin{bmatrix}
            J_{\Delta_kA}(x^k - \Delta_k y^k)\\
            J_{\Delta_k^{-1} B^{-1}}(2\Delta_k^{-1} x^{k+1} + \Delta_k^{-1}(x^k - \Delta_k y^k))
        \end{bmatrix}\\
        &\quad\overset{\text{Prop.\ref{general-moreau-decomposition}}}{=}
        \begin{bmatrix}
            J_{\Delta_kA}(x^k - \Delta_k y^k)\\
            2\Delta_k^{-1} x^{k+1} + \Delta_k^{-1}(x^k - \Delta_k y^k) - \Delta_k^{-1}J_{\Delta_k B}(2 x^{k+1} + (x^k - \Delta_k y^k))
        \end{bmatrix}.
    \end{split}
\end{equation}
The substitution of $w^k := x^k - \Delta_k y^k$, for all $k$, leads to the iteration
\begin{equation}\label{dr-with-delta}
    w^{k+1}= w^k + J_{ \Delta_k B}(2J_{ \Delta_k A}w^k - w^k) - J_{\Delta_k  A}w^k.
\end{equation}
This is the varying preconditioned DR iteration \eqref{dr-iteration-precond}. Choosing the preconditioners as $\Delta_k \equiv \I$ results in the standard DR-iteration as from \cite{lions-mercier}. A stepsize $t>0$ can be added by setting $\Delta_k = t \I$ or $\Delta_k = t_k \I$ in order to get a non-stationary iteration with positive stepsizes $(t_k)_{k\in\N}$.
In all of these cases iteration \eqref{dr-with-delta}, if it converges, does not converge to a solution of $0\in(A+B)x$, but the sequence $(J_{\Delta_k A} w^k)_{k\in\N}$ does.

\begin{proposition}\label{convergence-precond-dr}
    Let $A, B:\H\tto\H$ be maximal monotone operators such that $\zer (A+B)\neq \emptyset$. Define $(\Delta_k)_{k\in\N}$ with $\Delta_k:\H\to\H$ and $\Delta:\H\to\H$ such that they are linear, uniformly bounded, uniformly boundedly invertible (i.e. $\Vert\Delta_{k}\Vert\leq \overline{C}$ and $\Vert \Delta_{k}^{-1}\Vert\leq \underline{C}$ for some $\overline{C},\underline{C}$ and all $k$), 
    positive definite and selfadjoint. Additionally let them satisfy
    $$\Delta_k \to \Delta,\qquad\sum_{k\in\N}\Vert \Delta_{k+1}-\Delta_k\Vert < \infty.$$
    Let $(w^k)_{k\in\N}$ be generated by
    $$w^0,\in \H \quad w^{k+1}= w^k + J_{ \Delta_k B}(2J_{ \Delta_k A}w^k - w^k) - J_{\Delta_k  A}w^k.$$
    Then the sequence $(J_{\Delta_k A}w^k)_{k\in\N}$ converges weakly to some $x\in\zer (A+B)$.
\end{proposition}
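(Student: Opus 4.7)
The plan is to realise the preconditioned DR iteration as an instance of the degenerate variable metric proximal point algorithm and apply Corollary~\ref{convergence-corollary}. Concretely, consider
$$T = \begin{bmatrix} A & \I \\ -\I & B^{-1}\end{bmatrix}\colon \H\times\H \tto \H\times\H,\qquad \M = \begin{bmatrix}\Delta^{-1} & -\I \\ -\I & \Delta\end{bmatrix},$$
with $\M_k$ as in \eqref{varyig-preconditioner}. The operator $T$ is maximal monotone as the sum of the maximal monotone $\mathrm{diag}(A,B^{-1})$ and a bounded skew-symmetric operator, and $(x,y)\in\zer T$ is equivalent to $-y\in Ax$ and $y\in Bx$; hence $\zer T\neq \emptyset$ and every first component of an element of $\zer T$ lies in $\zer(A+B)$.

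First I would check that $\M_k$ and $\M$ are admissible preconditioners for $T$. Self-adjointness and boundedness are immediate from the block form, and the identity $\langle \M_k(x,y),(x,y)\rangle = \|\Delta_k^{-1/2}x - \Delta_k^{1/2}y\|^2$ shows positive semi-definiteness. The computation already carried out between \eqref{reformulation-one} and \eqref{dr-with-delta}, combined with Proposition~\ref{general-moreau-decomposition}, realises $J_T^{\M_k}$ as a single-valued and everywhere-defined mapping built from the standard resolvents $J_{\Delta_k A}$ and $J_{\Delta_k B}$, which exist under our hypotheses; the same applies to $\M$. For the two spectral assumptions on $(\M_k)$, uniform bounded invertibility together with $\Delta_k\to\Delta$ yields $\Delta_k^{-1}\to\Delta^{-1}$ in norm, hence $\M_k\to\M$, and the identity $\Delta_{k+1}^{-1}-\Delta_k^{-1} = \Delta_{k+1}^{-1}(\Delta_k - \Delta_{k+1})\Delta_k^{-1}$ gives $\|\M_{k+1}-\M_k\|\leq (1+\underline{C}^2)\|\Delta_{k+1}-\Delta_k\|$, so the summability transfers from $\Delta_k$ to $\M_k$.

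The main obstacle is producing a uniform Lipschitz constant $L$ for $(\M_k+T)^{-1}$. For this I would solve $(a,b)\in(\M_k+T)(x,y)$ explicitly: the first row yields $x = J_{\Delta_k A}(\Delta_k a)$, and substituting into the second row expresses $y$ via $J_{\Delta_k^{-1}B^{-1}}$ evaluated at a linear combination of $b$, $x$, and $\Delta_k^{\pm 1}$. Using the rearrangement $J_{\Delta_k A} = (\Delta_k^{-1}+A)^{-1}\Delta_k^{-1}$ and the fact that $\Delta_k^{-1}+A$ is strongly monotone with modulus $\geq 1/\overline{C}$, each such resolvent is Lipschitz with constant $\leq \overline{C}\underline{C}$ uniformly in $k$; chaining these estimates with the uniform bounds on $\|\Delta_k^{\pm 1}\|$ produces the desired $L$. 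Corollary~\ref{convergence-corollary} then yields weak convergence of $u^k = (x^k, y^k)$ to some $(x^*, y^*)\in\zer T$, and since the first row of the proximal point iteration reads $x^{k+1} = J_{\Delta_k A} w^k$, the sequence $(J_{\Delta_k A} w^k)_{k\in\N}$ converges weakly to $x^*\in\zer(A+B)$.
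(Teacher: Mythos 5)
Your proposal is correct and follows essentially the same route as the paper: it casts the iteration as the degenerate variable metric proximal point algorithm with the same block operator $T$ and preconditioners $\M_k$, verifies admissibility, the convergence and summability conditions, and the uniform Lipschitz continuity of $(\M_k+T)^{-1}$, and then invokes Corollary~\ref{convergence-corollary}. The only cosmetic difference is that you derive the uniform Lipschitz constant via strong monotonicity of $\Delta_k^{-1}+A$ rather than via nonexpansiveness of the resolvents in the $\Delta_k^{\pm 1}$-weighted metrics, which is an equally valid bookkeeping of the same estimate.
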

\begin{proof}
    It has already been shown that the varying preconditioned DR method can be realized by the varying PPP algorithm with the choice of $T: \H\times\H \tto \H\times \H$ and $\M_k:\H\times\H \to \H\times \H$ as
    \begin{equation*}
        T = \begin{bmatrix} A & \I \\ -\I & B^{-1}\end{bmatrix},\quad\quad \M_k = \begin{bmatrix}\Delta_k^{-1} & -\I\\ -\I & \Delta_k
        \end{bmatrix}.
    \end{equation*}
    Where the assumption on the resolvents is satisfied since by assumption $A$ and $B$ are maximal monotone. In fact, we have that $\Delta_k A, \Delta_k B$ are maximal monotone in $(\H,\|.\|_{\Delta_k^{-1}})$ and $\Delta_k^{-1} B^{-1}$ is maximal monotone in $(\H,\|.\|_{\Delta_k})$ so that the resolvents $J_{\Delta_kA}$, $J_{\Delta_k^{-1}B^{-1}}$ and $J_{\Delta_kB}$ are defined everywhere and single valued.
    The maximal monotonicity of $T$ is a direct consequence of the maximal monotonicity of $A$ and $B$ and from $\zer (A+B)\neq \emptyset$ follows $\zer T \neq \emptyset$. Hence, the proposition follows with Corollary \ref{convergence-corollary} after verifying that all of the following conditions are fulfilled.\medskip
    
    \noindent\textbf{1. $\M_k$ are admissible preconditioners for $T$:}
    The linearity of $\M_k$ is clear by definition. For $x = [x_1, x_2]^T$ it holds that
    \begin{equation*}
        \begin{split}
            \Vert \M_k x\Vert_{\H\times\H} &\leq \Vert \begin{bmatrix}\Delta_k^{-1}\,\I & 0\\  0 & \Delta_k\,\I\end{bmatrix}\begin{bmatrix}x_1\\x_2\end{bmatrix}\Vert_{\H\times\H} + \Vert \begin{bmatrix}0 & -\I\\  -\I & 0\end{bmatrix}\begin{bmatrix}x_1\\x_2\end{bmatrix}\Vert_{\H\times\H}\\
            &= \sqrt{\Vert \Delta_k^{-1}x_1\Vert^2 + \Vert \Delta_k x_2\Vert^2} + \Vert x\Vert_{\H\times\H} \leq (\sqrt{\underline{C}^{2} + \overline{C}^2}+1)\Vert x\Vert_{\H\times\H}.
        \end{split}
    \end{equation*}
    Hence, the $\M_k$ are uniformly bounded. Since the $\Delta_k$ are self-adjoint we get
    \begin{equation*}
        \begin{split}
            \langle \M_k x, x \rangle_{\H\times\H} &= \langle \begin{bmatrix}\Delta_k^{-1} x_1 - x_2\\ -x_1 + \Delta_kx_2\end{bmatrix}, \begin{bmatrix}x_1\\x_2\end{bmatrix}\rangle_{\H\times\H}\\
            &= \langle \Delta_k^{-1}x_1, x_1\rangle - 2 \langle x_1, x_2 \rangle + \langle \Delta_k x_2, x_2 \rangle\\
            &= \Vert \Delta_k^{-1}x_1 \Vert^2_{\Delta_k} - 2 \langle \Delta_k^{-1}x_1, x_2\rangle_{\Delta_k} + \Vert x_2 \Vert^2_{\Delta_k}\\
            &= \Vert \Delta_k^{-1} x_1 - x_2\Vert^2_{\Delta_k}  \geq 0.
        \end{split}
    \end{equation*}
    Therefore, all $\M_k$ are positive semi-definite.
    Now we verify that the $(\M_k+T)^{-1}$ are single valued and have full domain. These inverses are
    \begin{equation}\label{lipschitz-preparation}
        \begin{split}
            (\M_k + T)^{-1} &= \begin{bmatrix}\Delta_k^{-1} + A & 0\\ -2\I & \Delta_k + B^{-1}\end{bmatrix}^{-1}\\
            &= \begin{bmatrix}\Delta_k^{-1}(\I + \Delta_k A) & 0\\ -2\I & \Delta_k (\I + \Delta_k^{-1}B^{-1})\end{bmatrix}^{-1}.
        \end{split}
    \end{equation}
    Such lower triangular block operators are invertible and single valued, if the operators on their diagonals are invertible. As mentioned at the beginning of the proof, from maximal monotonicity of $A$ and $B$ we have that the resolvents $(\I + \Delta_k A)^{-1}$ and $(\I + \Delta_k^{-1} B^{-1})^{-1}$ are single valued and everywhere defined.
    Hence, the inverses $(\M_k+T)^{-1}$ are single valued and have full domain, which implies that $\M_k$ are admissible preconditioners for $T$.\medskip
    
    \noindent\textbf{2. $\M_k$ and $\M$ fulfil convergence requirements:}
    The convergence of $\M_k\to\M$ with $\M$ defined as $\M:= \begin{bmatrix}\Delta^{-1} & -\I\\-\I & \Delta\end{bmatrix}$ is fulfilled since $\Delta_k\to\Delta$. Furthermore, there exists a constant $C>0$, such that
    \begin{equation*}
        \begin{split}
            \Vert \M_{k+1}-\M_k\Vert &= \Vert\begin{bmatrix}(\Delta_{k+1}^{-1}-\Delta_k^{-1}) & 0\\ 0 & (\Delta_{k+1}-\Delta_k)\end{bmatrix}\Vert\\
            &\leq \Vert\begin{bmatrix}-\Delta_{k+1}^{-1}\Delta_k^{-1}&0\\0&\I\end{bmatrix}\Vert\cdot \Vert \Delta_{k+1}-\Delta_k\Vert\\
            & \leq C\cdot\Vert \Delta_{k+1}-\Delta_k\Vert.
        \end{split}
    \end{equation*}
    From the sumability of $\Vert \Delta_{k+1}-\Delta_k\Vert$ now follows that
    $$\sum_{k\in\N}\Vert \M_{k+1}-\M_k\Vert \leq C \cdot \sum_{k\in\N}\Vert \Delta_{k+1}-\Delta_k\Vert < \infty.$$
    \medskip
    
    \noindent\textbf{3. $(\M_k + T)^{-1}$ are $L$-Lipschitz:}
    After defining $y=[y_1, y_2]^T$, one sees from \eqref{lipschitz-preparation} after executing the inverse that
    $$y = (\M_k+T)^{-1}x \quad \Leftrightarrow \quad x
            \in
            \begin{bmatrix}
                \Delta_k^{-1}(\I + \Delta_k A)y_1\\
                -2 y_1 + \Delta_k(\I + \Delta_k^{-1} B^{-1})y_2
            \end{bmatrix}
    $$
    The maximal monotonicity of $\Delta_k A$ and $\Delta_k^{-1}B^{-1}$ in $(\H,\|.\|_{\Delta_k^{-1}})$ and $(\H,\|.\|_{\Delta_k})$ respectively, implies the single valuedness of the resolvents in
    \begin{equation*}
        (\M_k + T)^{-1}x = 
        \begin{bmatrix}
            y_1\\y_2
        \end{bmatrix}
        = \begin{bmatrix}
            J_{\Delta_kA}(\Delta_k x_1)\\
            J_{\Delta_k^{-1} B^{-1}}(\Delta_k^{-1}(2 J_{\Delta_kA}(\Delta_k x_1) + x_2))
        \end{bmatrix}.
    \end{equation*}
    Since the resolvents of maximal monotone operators are non-expansive and all $\Delta_k$ and $\Delta_k^{-1}$ can be bounded by the same constant, one may also find a constant $L>0$ such that $(\M_k + T)^{-1}$ are $L$-Lipschitz.
\end{proof}

As a corollary of this general result we get the following result which has also been shown in~\cite{Liang2017,adaptive-stepsizes}:

\begin{corollary}\label{varying-dr-stepsize-convergence}
Let $A,B:\H\tto\H$ be maximal monotone and $\zer (A+B) \neq \emptyset$. Let the DR iteration be given as
\begin{equation}\label{dr-iteration-in-propos}
    w^0\in\H,\quad w^{k+1}= w^k + J_{t_k B}(2J_{t_k A}w^k - w^k) - J_{t_k A}w^k,
\end{equation}
where $(t_k)_{k\in\N} \in \R_{>0}^\N$ is a non-negative stepsize sequence with
$$t_k \to t > 0,\qquad \sum_{k\in\N} \vert t_{k+1} - t_k \vert < \infty.$$
Then, the sequence $(J_{t_k A}w^k)_{k\in\N}$ converges weakly to some $x \in\zer (A+B)$.
\end{corollary}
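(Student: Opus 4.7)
The plan is to deduce this directly from Proposition \ref{convergence-precond-dr} by specializing the preconditioner sequence to $\Delta_k := t_k \I$ (and $\Delta := t\I$). Under this choice, the varying preconditioned DR iteration \eqref{dr-with-delta} becomes exactly the iteration \eqref{dr-iteration-in-propos}, so all that remains is to verify the hypotheses of Proposition \ref{convergence-precond-dr}.

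First I would check the structural assumptions. The operators $t_k \I$ are linear, self-adjoint and positive definite as long as $t_k>0$, which is part of the assumption. The convergence $\Delta_k\to\Delta$ in operator norm is immediate from $t_k\to t$, and $\|\Delta_{k+1}-\Delta_k\|=|t_{k+1}-t_k|$, so the summability condition $\sum_k\|\Delta_{k+1}-\Delta_k\|<\infty$ reduces to the assumed $\sum_k |t_{k+1}-t_k|<\infty$.

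The only point requiring a small argument is the \emph{uniform} boundedness of $\Delta_k$ together with the uniform boundedness of $\Delta_k^{-1}$. Since $t_k\to t>0$, there exists $k_0$ such that $t_k \ge t/2$ for $k\ge k_0$; combining this with the (finitely many) positive values $t_0,\dots,t_{k_0-1}$ yields a uniform lower bound $\underline{t}>0$, and convergence also gives a uniform upper bound $\overline{t}<\infty$. Hence $\|\Delta_k\|\le \overline{t}$ and $\|\Delta_k^{-1}\|\le 1/\underline{t}$ for all $k$, matching the $\overline{C}$ and $\underline{C}$ conditions of Proposition \ref{convergence-precond-dr}.

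With all hypotheses of Proposition \ref{convergence-precond-dr} verified, I would invoke it to conclude that $(J_{\Delta_k A}w^k)_{k\in\N}=(J_{t_k A}w^k)_{k\in\N}$ converges weakly to some $x\in\zer(A+B)$. I do not expect any genuine obstacle here: the whole argument is a routine specialization, and the only subtlety is the lower bound on $t_k$, which follows at once from $t_k\to t>0$ and $t_k>0$ for every $k$.
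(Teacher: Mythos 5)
Your proposal is correct and follows exactly the same route as the paper: both proofs specialize Proposition \ref{convergence-precond-dr} by setting $\Delta_k = t_k\I$ and then verify its hypotheses, with the summability of $\Vert\Delta_{k+1}-\Delta_k\Vert$ reducing to $\sum_k|t_{k+1}-t_k|<\infty$. Your explicit argument for the uniform bounds on $\Vert\Delta_k\Vert$ and $\Vert\Delta_k^{-1}\Vert$ via $t_k\to t>0$ is a slightly more careful spelling-out of a step the paper dismisses as ``easy to verify,'' but it is not a different approach.
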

\begin{proof}
The statement follows directly from Proposition \ref{convergence-precond-dr} after setting $\Delta_k = t_k \I$. It is easy to verify that $\Delta_k$ are linear, invertible, bounded by the same constant, positive semi-definite and self-adjoint. Furthermore, $\sum_{k\in\N}\Vert \Delta_{k+1}-\Delta_k\Vert < \infty$ follows from the given assumptions on $(t_k)_{k\in\N}$.\\
\end{proof}

\subsection{Primal-dual Douglas-Rachford}
The DR method can be used to solve the minimization problem
\begin{equation}\label{primal}
\underset{x\in\mathcal{X}}{\text{min}}\, \Big\{f(x) + g(Kx)\Big\},
\end{equation}
for $f:\mathcal{X}\to \overline{\mathbb R} := \mathbb R\cup \{\infty\},\, g: \mathcal{Y} \to \overline{\mathbb R}$, where $\mathcal{X}$ and $\mathcal{Y}$ are two Hilbert spaces and $K\in \mathcal{L}(\mathcal{X},\mathcal{Y})$. In order to apply the DR method to this problem, one considers the monotone inclusion
\begin{equation}\label{primal-dual-splitting}
    0\in \biggl(
    \underbrace{\begin{bmatrix}
        \partial f & 0\\ 
        0 & \partial g^*
    \end{bmatrix}}_{=:A}
    +
    \underbrace{\begin{bmatrix}
        0 & K^*\\
        -K & 0
    \end{bmatrix}}_{=:B}
    \biggr)
    \begin{bmatrix}
    x\\y
    \end{bmatrix},
\end{equation}
where $y$ denotes the solution of the dual problem of \eqref{primal} \cite{oconnor, procond-dr}. Applying the stationary DR iteration with stepsize $t>0$ gives
\begin{equation}\label{dr-iteration-stationary}
    \begin{cases}
        \quad x^k &= \,\,\prox_{tf}(p^k)\\
        \quad  y^k &= \,\,\prox_{t g^*}( q^k)\\
        \;\;\begin{bmatrix}
            u^k\\  v^k
        \end{bmatrix} &=
        \begin{bmatrix}
            \I & t  K^*\\
            -t K & \I
        \end{bmatrix}^{-1}

        \begin{bmatrix}
            2x^k - p^k\\
            2  y^k -  q^k
        \end{bmatrix}\\
        \quad p^{k+1} &= \,\,p^k + u^k - x^k\\
        \quad q^{k+1} &= \,\, q^k + v^k - y^k
    \end{cases}.
\end{equation}
O'Connor and Vandenberghe introduced in \cite{oconnor} the idea to consider the minimization of $f(x)+\tilde g(\tilde K x)$ instead of \eqref{primal}, where $\tilde g(y):= g(\beta^{-1} y)$ and $\tilde K := \beta K$. This does not change the problem itself, but allows to introduce a dual stepsize $s := \beta t$. The insertion of $\tilde g$ and $\tilde K$ and re-scaling of variables in \eqref{dr-iteration-stationary} results in 
\begin{equation}\label{dr-s-stationary-iteration}
    \begin{cases}
        \quad x^k &= \,\,\prox_{tf}(p^k)\\
        \quad \tilde y^k &= \,\,\prox_{s g^*}(\tilde q^k)\\
        \;\;\begin{bmatrix}
            u^k\\ \tilde v^k
        \end{bmatrix} &=
        \begin{bmatrix}
            \I & t  K^*\\
            -s K & \I
        \end{bmatrix}^{-1}
        \begin{bmatrix}
            2x^k - p^k\\
            2 \tilde y^k - \tilde q^k
        \end{bmatrix}\\
        \quad p^{k+1} &= \,\,p^k + u^k - x^k\\
        \quad \tilde q^{k+1} &= \,\,\tilde q^k +  \tilde v^k - \tilde y^k
    \end{cases},
\end{equation}
and naturally, one can consider the corresponding non-stationary scheme
\begin{equation}\label{dr-iteration}
    \begin{cases}
        \quad x^k &= \,\,\prox_{t_kf}(p^k)\\
        \quad \tilde y^k &= \,\,\prox_{s_k g^*}(\tilde q^k)\\
        \;\;\begin{bmatrix}
            u^k\\ \tilde v^k
        \end{bmatrix} &=
        \begin{bmatrix}
            \I & t_k  K^*\\
            -s_k K & \I
        \end{bmatrix}^{-1}
        \begin{bmatrix}
            2x^k - p^k\\
            2 \tilde y^k - \tilde q^k
        \end{bmatrix}\\
        \quad p^{k+1} &= \,\,p^k + u^k - x^k\\
        \quad \tilde q^{k+1} &= \,\,\tilde q^k +  \tilde v^k - \tilde y^k
    \end{cases},
\end{equation}
with stepsize sequences $(t_k)_{k\in\N}$ and $(s_k)_{k\in\N}$.

\begin{remark}
For varying stepsizes, the re-scaling argument which allowed the transition from \eqref{dr-iteration-stationary} to \eqref{dr-s-stationary-iteration}, is no longer valid. While trying to derive \eqref{dr-iteration} from a non-stationary version of \eqref{dr-iteration-stationary}, one arrives at the iteration

\begin{equation*}
    \begin{cases}
    \quad x^k &= \,\,\prox_{t_kf}(p^{k})\\
    \quad \tilde y^k &= \,\,\prox_{s_k g^*}(\tilde q^{k})\\
    \,\,\begin{bmatrix}
        u^k\\\tilde v^k
    \end{bmatrix}
    &=
    \begin{bmatrix}
        \I& t_k K^T\\-s_k K & \I
    \end{bmatrix}^{-1}
    \begin{bmatrix}
        2x^k - p^{k}\\
        2 \tilde z^k - \tilde q^{k}
    \end{bmatrix}\\
    \quad p^{k+1} &= \,\,p^{k} + u^k-x^k\\
    \quad \tilde q^{k+1} &=\,\, \frac{\beta_{k+1}}{\beta_k}(\tilde q^{k} + \tilde v^k - \tilde y^k)
\end{cases}.
\end{equation*}
Hence, the equivalence of both non-stationary iteration schemes does not hold anymore and the convergence of \eqref{dr-iteration} does not follow as for \eqref{dr-s-stationary-iteration} from the convergence of the non re-scaled iteration.
\end{remark}

\begin{remark}
  We note that one can also consider algorithm~\eqref{dr-iteration} where $\prox_{tf}$ is replaced by $J_{tA_{1}}$ and $\prox_{sg^{*}}$ by $J_{sA_{2}^{-1}}$, respectively for two maximally monotone operators $A_{1},A_{2}$ (and similarly for the non-stationary case and the case with dual stepsizes). The resulting schemes can be used to solve the inclusion $0\in A_1(x) + K^*A_2(Kx)$.
  The convergence theory that we develop also applies to this slightly more general case.
\end{remark}

The convergence of \eqref{dr-iteration} can be proven with the theory from Section \ref{section-convergence}:

\begin{proposition}\label{final-convergence-result}
    Let $(t_k)_{k\in\N} \in \R_{>0}^\N$ and $(s_k)_{k\in\N} \in \R_{>0}^\N$ be two stepsize sequences, which satisfy
    \begin{equation*}
        \begin{split}
            t_k& \to t > 0, \quad \sum_{k\in\N}\vert t_{k+1} - t_k\vert < \infty,\\
            s_k& \to s > 0, \quad \sum_{k\in\N}\vert s_{k+1} - s_k\vert < \infty.
        \end{split}
    \end{equation*}
    Then iteration \eqref{dr-iteration} converges weakly, if a finite minimizer of the primal problem \eqref{primal} exists. In that case, the occurring sequence $(x^k)_{k\in\N}$ converges weakly to such a minimizer.
\end{proposition}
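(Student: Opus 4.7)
The plan is to recognize iteration \eqref{dr-iteration} as an instance of the varying preconditioned Douglas-Rachford iteration \eqref{dr-iteration-precond} applied to the primal-dual splitting \eqref{primal-dual-splitting} on the product space $\mathcal{X}\times\mathcal{Y}$, and then invoke Proposition \ref{convergence-precond-dr}. The key choice is the block-diagonal preconditioner
$$\Delta_k := \begin{bmatrix} t_k\,\I_{\mathcal{X}} & 0 \\ 0 & s_k\,\I_{\mathcal{Y}} \end{bmatrix}, \qquad \Delta := \begin{bmatrix} t\,\I_{\mathcal{X}} & 0 \\ 0 & s\,\I_{\mathcal{Y}} \end{bmatrix}.$$
Each $\Delta_k$ is linear, self-adjoint and positive definite, with $\|\Delta_k\|=\max(t_k,s_k)$ and $\|\Delta_k^{-1}\|=\max(1/t_k,1/s_k)$, so the uniform bounds required by Proposition \ref{convergence-precond-dr} hold because $t_k\to t>0$ and $s_k\to s>0$. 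The convergence $\Delta_k\to\Delta$ is immediate, and the elementary estimate $\|\Delta_{k+1}-\Delta_k\|\le |t_{k+1}-t_k|+|s_{k+1}-s_k|$ combined with the two summability hypotheses yields $\sum_{k}\|\Delta_{k+1}-\Delta_k\|<\infty$.

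Next, I would verify the identification of the iterations. Writing $w^k=(p^k,\tilde q^k)$ and taking $A,B$ from \eqref{primal-dual-splitting}, the block structure gives
$$J_{\Delta_k A}\,w^k = \bigl(\prox_{t_k f}(p^k),\,\prox_{s_k g^*}(\tilde q^k)\bigr) = (x^k,\tilde y^k),$$
while
$$\I+\Delta_k B = \begin{bmatrix} \I & t_k K^* \\ -s_k K & \I \end{bmatrix}$$
is exactly the matrix inverted in the middle line of \eqref{dr-iteration}, so that $J_{\Delta_k B}(2J_{\Delta_k A}w^k - w^k)=(u^k,\tilde v^k)$. Substituting these into the Douglas-Rachford update $w^{k+1}=w^k+J_{\Delta_k B}(2J_{\Delta_k A}w^k-w^k)-J_{\Delta_k A}w^k$ reproduces precisely the last two lines of \eqref{dr-iteration}, so the two schemes coincide.

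Finally, the maximal monotonicity of $A$ (block-diagonal of subdifferentials of proper convex lsc functions) and of the bounded skew-adjoint $B$ is standard, and the assumed existence of a primal minimizer, together with the implicit qualification condition making the sum rule $\partial(f+g\circ K)=\partial f+K^*\partial g\circ K$ valid, ensures $\zer(A+B)\neq\emptyset$. Proposition \ref{convergence-precond-dr} then delivers $J_{\Delta_k A}w^k=(x^k,\tilde y^k)\weakto (x^*,y^*)$ for some zero $(x^*,y^*)$ of $A+B$; in particular $x^k\weakto x^*$, and the inclusions $0\in\partial f(x^*)+K^*y^*$ together with $K x^*\in\partial g^*(y^*)$, i.e.\ $y^*\in\partial g(Kx^*)$, combine to $0\in\partial f(x^*)+K^*\partial g(Kx^*)$, showing that $x^*$ is a primal minimizer of \eqref{primal}. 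The main obstacle is merely the algebraic bookkeeping of the first two steps: once the correct $\Delta_k$ is chosen and the block resolvents $J_{\Delta_k A}$ and $J_{\Delta_k B}$ are computed, the summability and convergence hypotheses on the scalar stepsize sequences transfer verbatim to the operator level, and weak convergence follows immediately from the framework of Section \ref{section-convergence}.
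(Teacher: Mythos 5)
Your proposal is correct and follows essentially the same route as the paper: the same block-diagonal choice of $\Delta_k$, the same identification of the block resolvents with the prox steps and the matrix inverse in \eqref{dr-iteration}, and the same reduction to Proposition \ref{convergence-precond-dr}. If anything, you are slightly more explicit than the paper about the uniform invertibility bounds, the qualification condition needed for $\zer(A+B)\neq\emptyset$, and the final step identifying the weak limit as a primal minimizer.
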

\begin{proof}
    Let $\mathcal{H}:=\mathcal{X}\times \mathcal{Y}$. Choosing $A:\mathcal{H}\tto\mathcal{H}$ and $B:\mathcal{H}\to\mathcal{H}$ and a sequence of preconditioners $(\Delta_k)_{k\in\N}$ as
    \begin{equation*}
        A := \begin{bmatrix}
        \partial f & 0\\ 
        0 & \partial g^*
    \end{bmatrix},
    \qquad
    B := \begin{bmatrix}
        0 & K^T\\
        -K & 0
    \end{bmatrix},
    \qquad
    \Delta_k := \begin{bmatrix}t_k \I & 0\\ 0 & s_k \I\end{bmatrix},
    \end{equation*}
    results in the evaluation of the resolvents $J_{\Delta_k A}$ and $J_{\Delta_k B}$ as
    \begin{equation*}
        \begin{split}
            (\I + \Delta_k A)^{-1}
            \begin{bmatrix}
                 p^k \\  \tilde q^k
            \end{bmatrix}
            &= (\I + \begin{bmatrix}
                t_k \partial f & 0 \\
                0 & s_k \partial g^*
            \end{bmatrix})^{-1}
            \begin{bmatrix}
                p^k \\  \tilde q^k
            \end{bmatrix}
            =
            \begin{bmatrix}
                \prox_{t_kf}(\tilde x) \\ \prox_{s_kg^*}(\tilde y)
            \end{bmatrix},\\
            (\I + \Delta_k B)^{-1}\begin{bmatrix}
                \tilde u^k\\\tilde v^k
            \end{bmatrix}
            &= 
            \begin{bmatrix}
                \I & t_kK^T\\
                -s_k K & \I
            \end{bmatrix}^{-1}
            \begin{bmatrix}
                \tilde u^k\\\tilde v^k
            \end{bmatrix}.
        \end{split}
    \end{equation*}
    Hence, iteration \eqref{dr-iteration} is equivalent to the varying preconditioned DR iteration
    $$w^{k+1}= w^k + J_{ \Delta_k B}(2J_{ \Delta_k A}w^k - w^k) - J_{\Delta_k  A}w^k$$
    and, since $A$ and $B$ are maximal monotone operators, the convergence of \eqref{dr-iteration} can be shown with Proposition \ref{convergence-precond-dr}. In fact, it remains to show that all requirements on $(\Delta_k)_{k\in\N}$ are fulfilled. The convergence of $(\Delta_k)_{k\in\N}$ to some $\Delta$ follows from the convergence $t_k\to t$ and $s_k \to s$. Furthermore, it holds that
    \begin{equation*}
        \Vert \Delta_{k+1} - \Delta_k \Vert \leq \Vert (t_{k+1} - t_k) \I\Vert + \Vert(s_{k+1} - s_k)  \I \Vert = \vert t_{k+1}-t_k\vert + \vert s_{k+1}-s_k\vert
    \end{equation*}
    and hence
    \begin{equation*}
        \sum_{k\in\N} \Vert \Delta_{k+1} - \Delta_k \Vert \leq \sum_{k\in\N}\vert t_{k+1}-t_k\vert + \sum_{k\in\N} \vert s_{k+1}-s_k\vert < \infty.
    \end{equation*}
    Additionally, $\Delta_k$ are linear, bounded and invertible. Their symmetry implies that they are self-adjoint and their positive semi-definiteness follows from $\langle \Delta_k x, x\rangle \geq \min\{t_k, s_k\}\Vert x\Vert^2 \geq 0$.
\end{proof}

\section{Adaptive stepsizes for primal-dual Douglas-Rachford}\label{section-dr-stepsize}
After showing the convergence of the varying preconditioned DR method \eqref{dr-with-delta} and discussing possibilities to apply it to primal-dual problems, we now turn our attention to the problem of how to choose the varying preconditioners $(\Delta_k)_{k\in\N}$ (or varying stepsize sequences $(t_k)_{k\in\N}, (s_k)_{k\in\N}$ in case of the Primal-Dual DR method).
In order to do so, we follow the approach from \cite{adaptive-stepsizes} and choose the preconditioners adaptively during the iterations with the aim to increase the convergence speed.
We start by motivating a heuristic for linear operators $A$, $B$ and $\Delta$ between finite dimensional Hilbert spaces, i.e. matrices.

Let $A$,$B \in \R^{N\times N}$ be monotone matrices and $\Delta\in \R^{N\times N}$ be positive definite. Recall that a matrix $T$ is monotone if
$\langle Tx, x \rangle\geq 0$ for all $x$ (and maximality is implied by linearity).
The preconditioned DR iteration \eqref{dr-with-delta} translates directly into the matrix
$$F_\Delta := \I + J_{\Delta B}(2J_{\Delta A} - \I) - J_{\Delta A},$$
where $\I$ denotes the identity matrix. The iteration $(w^k)_{k\in\N}$ generated by $w^{k+1}=F_{\Delta} w^k$ does not directly converge to a zero of $A+B$, but the sequence $(x_k)_{k\in\N}$ with $x^{k+1} = J_{\Delta A} x^k$ does. Since this section is restricted to single valued mappings, one may insert the substitution $x^{k+1} = J_{\Delta A}w^k$ directly into the iteration $w^{k+1} = F_\Delta w^k$, which results in
$$(\I + \Delta A)x^{k+1} = (\I + \Delta A)x^k + J_{\Delta B}(2x^k - (\I + \Delta A)x^k)-x^k.$$
Multiplication with the resolvent shows the equivalence to
$$x^{k+1} = J_{\Delta A }(\Delta A x^k + J_{\Delta B}(x^k - \Delta A x^k)).$$
Hence, the iteration $x^{k+1} = H_\Delta x^k$ with
\begin{equation}\label{linear-precond-dr}
    H_\Delta := J_{\Delta A}(\Delta A + J_{\Delta B} (\I - \Delta A))
\end{equation}
is equivalent to the fixed point iteration of $F_\Delta$. Furthermore, $H_\Delta$ and $F_\Delta$ are related via
$$F_\Delta = (\I + \Delta A)H_\Delta(\I + \Delta A)^{-1},$$
which implies the similarity of $F_\Delta$ and $H_\Delta$ and the coincidence of their eigenvalues.\\
As noticed in \cite{adaptive-stepsizes} (based on \cite[Theorem 11.2.1]{MatrixComputations}), the asymptotic convergence rate of the linear DR iteration is governed by the spectral radius $\rho (F_\Delta)$ of $F_\Delta$. Furthermore, the similarity of $F_\Delta$ and $H_\Delta$ justifies the minimization of the spectral radius of $H_\Delta$ in order to speed up the convergence speed of $F_\Delta$.

\begin{lemma}\label{lemma-eigenvalues}
Let $A,B\in\R^{N\times N}$ be maximal monotone, $\Delta\in\R^{N\times N}$ be symmetric positive definite, and $H_\Delta$ defined as in \eqref{linear-precond-dr}. Let $\lambda\in\mathbb C$ be an eigenvalue of $H_\Delta$ with the corresponding eigenvector $z\in\mathbb C^N$. Assume that $\lambda \neq 1$ and define
\begin{equation}\label{c-definition}
    c := \frac{\re(\langle Az, z\rangle)}{\Vert z\Vert_{\Delta^{-1}}^2 + \Vert  Az\Vert_\Delta^2},
\end{equation}
where $\re(u)$ denotes the real part of a complex number $u\in\mathbb C$. Then, we have $c\geq 0$ and 
\begin{equation}\label{eigenval-inequality}
    \left\vert \lambda - \frac{1}{2}\right\vert \leq \sqrt{\frac{1}{4} - \frac{c}{1+2c}}\leq \frac{1}{2},
\end{equation}
i.e. it holds $\lambda \in \mathbb B_{\frac{1}{2}}(\frac{1}{2})$.
\end{lemma}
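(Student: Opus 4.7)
My plan is to convert the spectral condition $H_\Delta z = \lambda z$ into two explicit linear relations among $z$, $\Delta A z$, and the auxiliary vector $v := J_{\Delta B}(\I - \Delta A)z$, and then impose the monotonicity of $A$ and $B$ (extended to complex vectors) to trap $\lambda$ inside a disc. From $H_\Delta z = J_{\Delta A}(\Delta A z + v) = \lambda z$, applying $\I + \Delta A$ to both sides yields $v = \lambda z + (\lambda - 1)\Delta A z$. Feeding this into the resolvent identity $(\I + \Delta B)v = (\I - \Delta A)z$ then produces $\Delta B v = (1-\lambda) z - \lambda \Delta A z$, equivalently $B v = (1-\lambda)\Delta^{-1} z - \lambda A z$. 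These two formulas for $v$ and $Bv$ in terms of $z$ and $\Delta A z$ are the core of the argument.

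The next step is to note that every monotone real matrix $T$ satisfies $\re\langle Tu, u\rangle \geq 0$ for every $u \in \mathbb{C}^N$: writing $u = x+iy$, the real part splits as $\langle Tx, x\rangle + \langle Ty, y\rangle$ in the real inner product, which is nonnegative. Applied to $A$ this gives $\re\gamma \geq 0$ with $\gamma := \langle A z, z\rangle$, and since the denominator in~\eqref{c-definition} is strictly positive ($z\neq 0$ and $\Delta$ is positive definite), $c\geq 0$ follows at once. Applied to $B$, the inequality $\re\langle Bv, v\rangle \geq 0$ becomes, after substituting the two formulas above, a quadratic inequality in $\lambda$ with coefficients built from $\alpha := \|z\|^2_{\Delta^{-1}}$, $\beta := \|A z\|^2_\Delta$, and $\gamma$.

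The main obstacle is making this expansion collapse cleanly, but the self-adjointness of $\Delta$ reduces every mixed scalar product: $\langle \Delta^{-1} z, \Delta A z\rangle = \langle z, A z\rangle = \bar\gamma$ and $\langle A z, \Delta A z\rangle = \beta$. Collecting real parts I expect the inequality to simplify to
\[
(\re\lambda - |\lambda|^2)(\alpha + \beta) \;\geq\; (|1-\lambda|^2 + |\lambda|^2)\,\re\gamma.
\]
Dividing by $\alpha + \beta > 0$ and using $|1-\lambda|^2 + |\lambda|^2 = 1 - 2(\re\lambda - |\lambda|^2)$, then solving for $\re\lambda - |\lambda|^2$, gives $\re\lambda - |\lambda|^2 \geq c/(1+2c)$. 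Finally, the elementary identity $|\lambda - \tfrac12|^2 = \tfrac14 - (\re\lambda - |\lambda|^2)$ delivers the first inequality in~\eqref{eigenval-inequality}, while the second bound $\sqrt{1/4 - c/(1+2c)} \leq 1/2$ is immediate from $c \geq 0$.
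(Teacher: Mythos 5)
Your proof is correct, but it takes a genuinely different route from the one in the paper. The paper does not carry out the eigenvalue computation itself: it changes the metric to $\langle\cdot,\cdot\rangle_{\Delta^{-1}}$, observes that $\tilde A=\Delta A$ and $\tilde B=\Delta B$ are maximal monotone in that inner product, rewrites $H_\Delta=(\I+\tilde A+\tilde B+\tilde B\tilde A)^{-1}(\I+\tilde B\tilde A)$, and then invokes Lemma~2.1 of the cited work on adaptive stepsizes with $t=1$, identifying the constant $c$ there with the $c$ of \eqref{c-definition}. You instead give a self-contained argument: from $J_{\Delta A}(\Delta Az+v)=\lambda z$ with $v=J_{\Delta B}(\I-\Delta A)z$ you correctly derive $v=\lambda z+(\lambda-1)\Delta Az$ and $Bv=(1-\lambda)\Delta^{-1}z-\lambda Az$, and expanding $\re\langle Bv,v\rangle\geq 0$ does collapse, via $\langle\Delta^{-1}z,\Delta Az\rangle=\langle z,Az\rangle$ and $\langle Az,\Delta Az\rangle=\Vert Az\Vert_\Delta^2$, to $(\re\lambda-\vert\lambda\vert^2)(\alpha+\beta)\geq(\vert 1-\lambda\vert^2+\vert\lambda\vert^2)\re\gamma$; the remaining algebra ($d\geq c/(1+2c)$ with $d=\re\lambda-\vert\lambda\vert^2$ and $\vert\lambda-\tfrac12\vert^2=\tfrac14-d$) is exactly right, and your complexification of monotonicity and the positivity of the denominator of $c$ are handled properly. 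What the paper's route buys is brevity and reuse of an established result; what yours buys is a proof that does not depend on the external lemma and in fact reproduces, in essence, the direct computation the authors chose not to print. A cosmetic remark: your argument never actually uses the hypothesis $\lambda\neq 1$ (for $\lambda=1$ it simply forces $c=0$), so nothing is lost there.
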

\begin{proof}
Since $\Delta$ is symmetric and positive definite, $\langle x ,y \rangle_{\Delta^{-1}} := \langle \Delta^{-1} x, y \rangle$ is an inner product with induced norm $\Vert x\Vert_{\Delta^{-1}} := \sqrt{\langle x,x\rangle_{\Delta^{-1}}}$. Furthermore, define $\tilde A = \Delta A$ and $\tilde B = \Delta B$. Both $\tilde A$ and $\tilde B$ are maximal monotone with respect to $\langle \cdot,\cdot \rangle_{\Delta^{-1}}$ by the monotonicity of $A$ and $B$ with respect to $\langle\cdot , \cdot \rangle$. Furthermore, $H_\Delta$ from \eqref{linear-precond-dr} becomes
\begin{equation*}
    \begin{split}
        H_\Delta &= (\I + \tilde A)^{-1}(\tilde A + (\I + \tilde B)^{-1}(\I - \tilde A))\\
        &= (\I + \tilde A)^{-1}(\I + \tilde B)^{-1}(\I + \tilde B\tilde A)\\
        &= (\I + \tilde A + \tilde B + \tilde B \tilde A)^{-1}(\I + \tilde B \tilde A).
    \end{split}
\end{equation*}
Therefore, Lemma 2.1 from \cite{adaptive-stepsizes}, applied with $t=1$ to the matrices $\tilde A$ and $\tilde B$ in the Hilbert space endowed with the inner product $\langle\cdot,\cdot\rangle_{\Delta^{-1}}$, states that
$$c = \frac{\re (\langle \tilde A z, z \rangle_{\Delta^{-1}})}{\Vert z\Vert^2_{\Delta^{-1}} + \Vert \tilde A z \Vert^2_{\Delta^{-1}}} = \frac{\re (\langle A z, z\rangle)}{\Vert z \Vert^2_{\Delta^{-1}} + \Vert A z \Vert^2_\Delta},$$
where $c \geq 0$. 
\end{proof}
\begin{figure}
    \centering
    \begin{tabular}[t]{p{0.47\textwidth} p{0.47\textwidth}}
         \subcaptionbox{$\Delta = t \I$}[0.47\textwidth]
        {\includegraphics[width=0.47\textwidth]{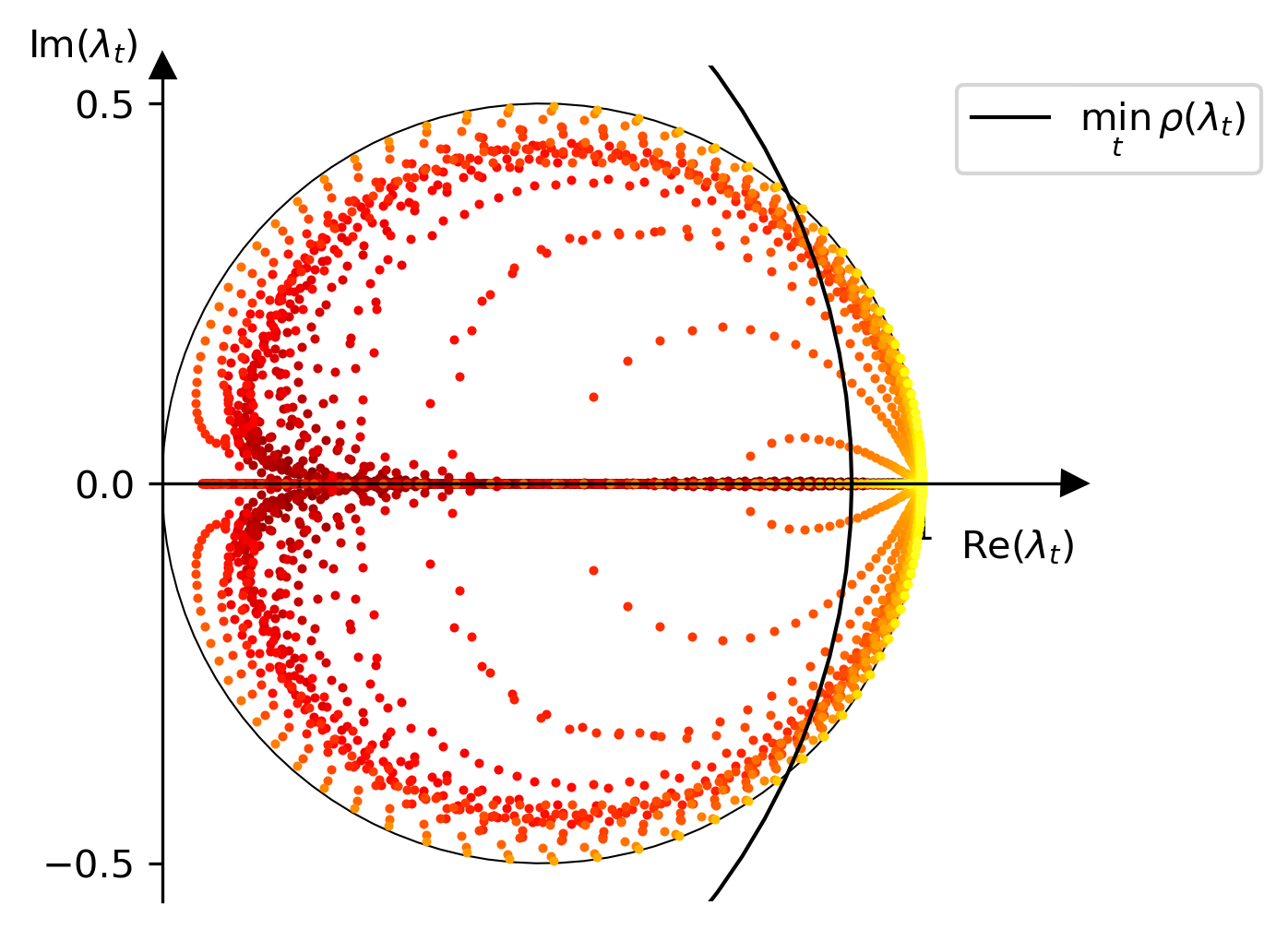}}
        &
        \subcaptionbox{$\Delta = \begin{bmatrix}\tilde t \mathbb I & 0 \\ 0 & \tilde s \mathbb I\end{bmatrix}$}[0.47\textwidth]
        {\includegraphics[width=0.47\textwidth]{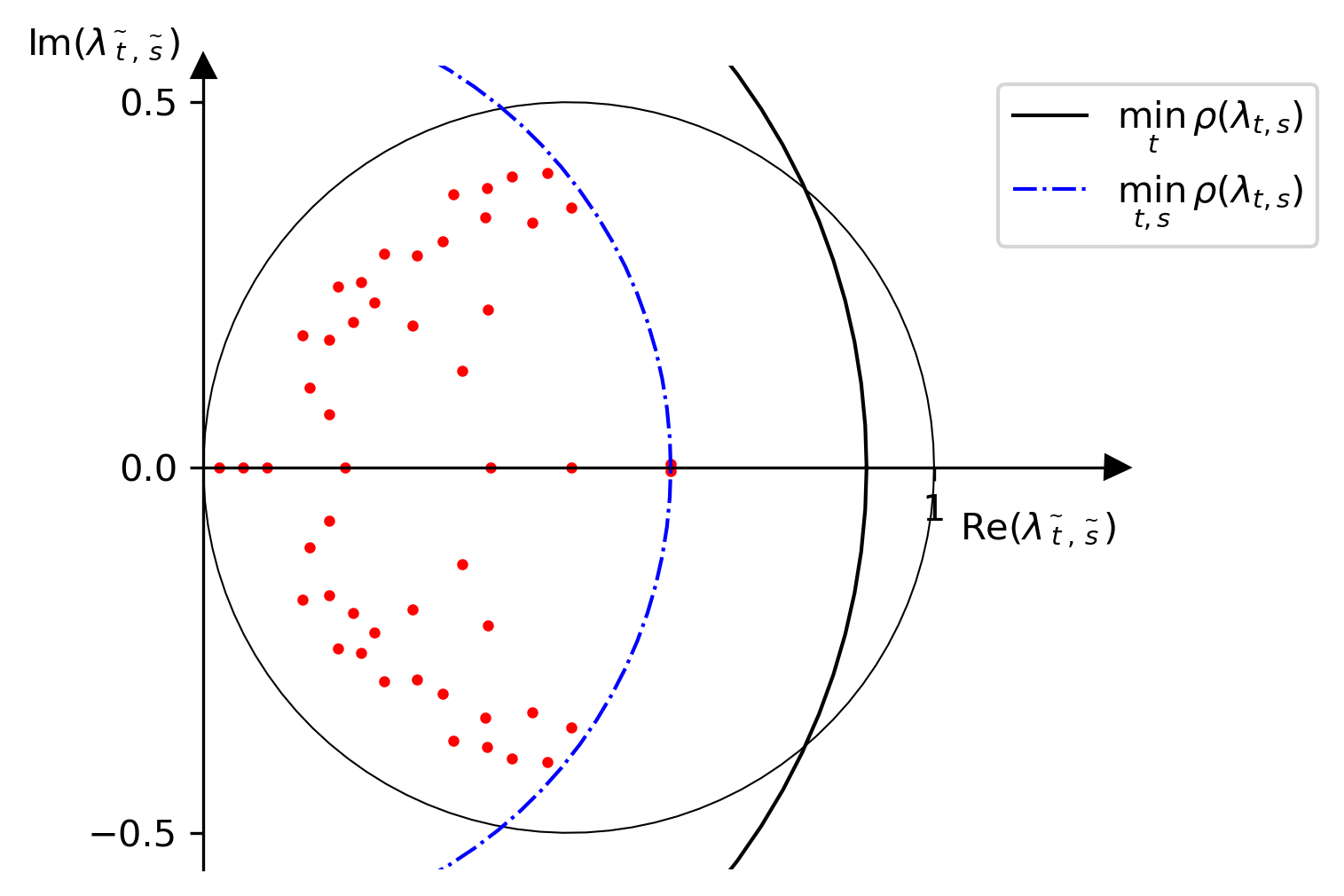}}
        \\
         \subcaptionbox{$\Delta = \begin{bmatrix}t \mathbb I & 0 \\ 0 & \tilde s \mathbb I\end{bmatrix}$}[0.47\textwidth]
        {\includegraphics[width=0.47\textwidth]{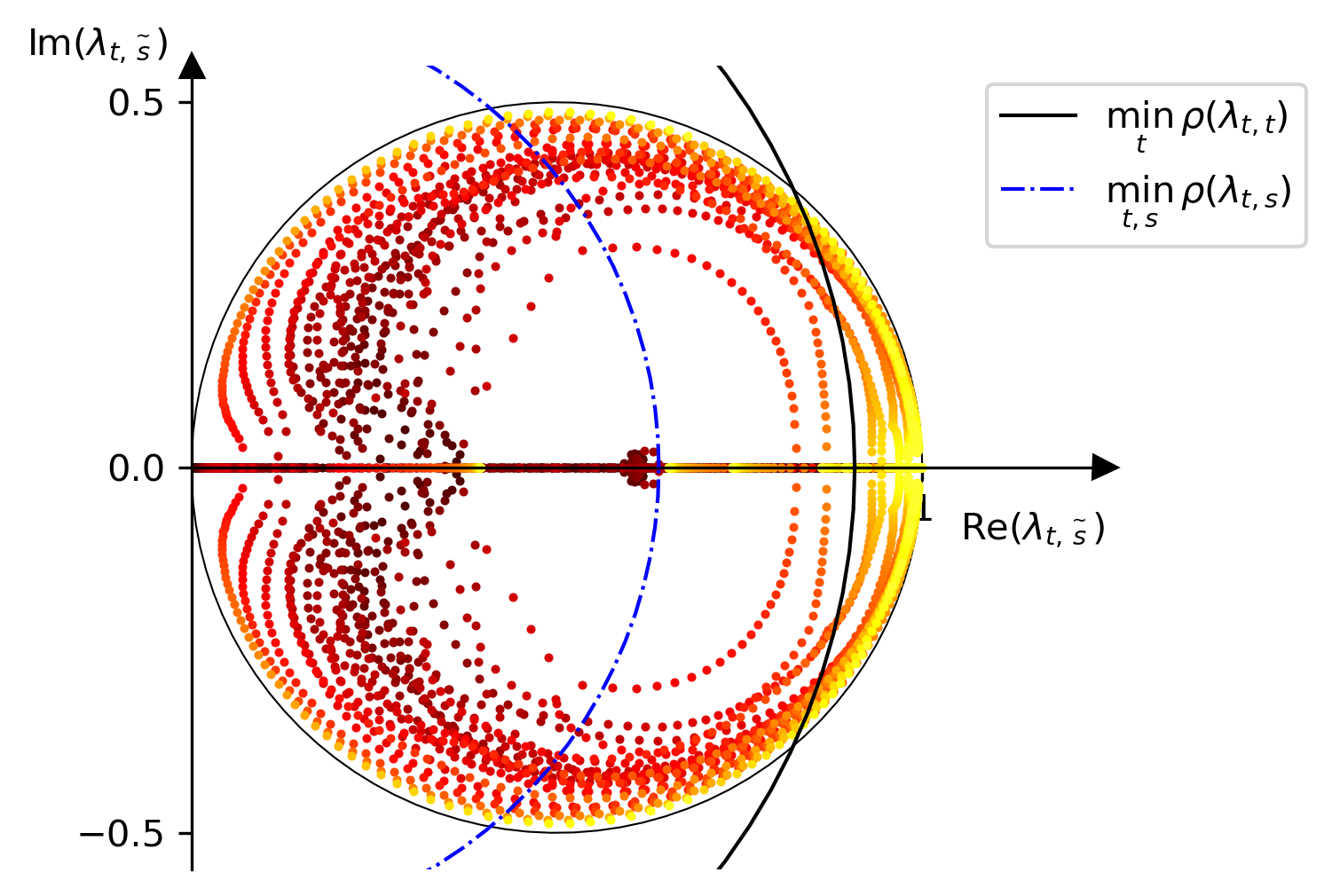}}
        & 
        \subcaptionbox{$\Delta = \begin{bmatrix}\tilde t \mathbb I & 0 \\ 0 & s \mathbb I\end{bmatrix}$}[0.47\textwidth]
        {\includegraphics[width=0.47\textwidth]{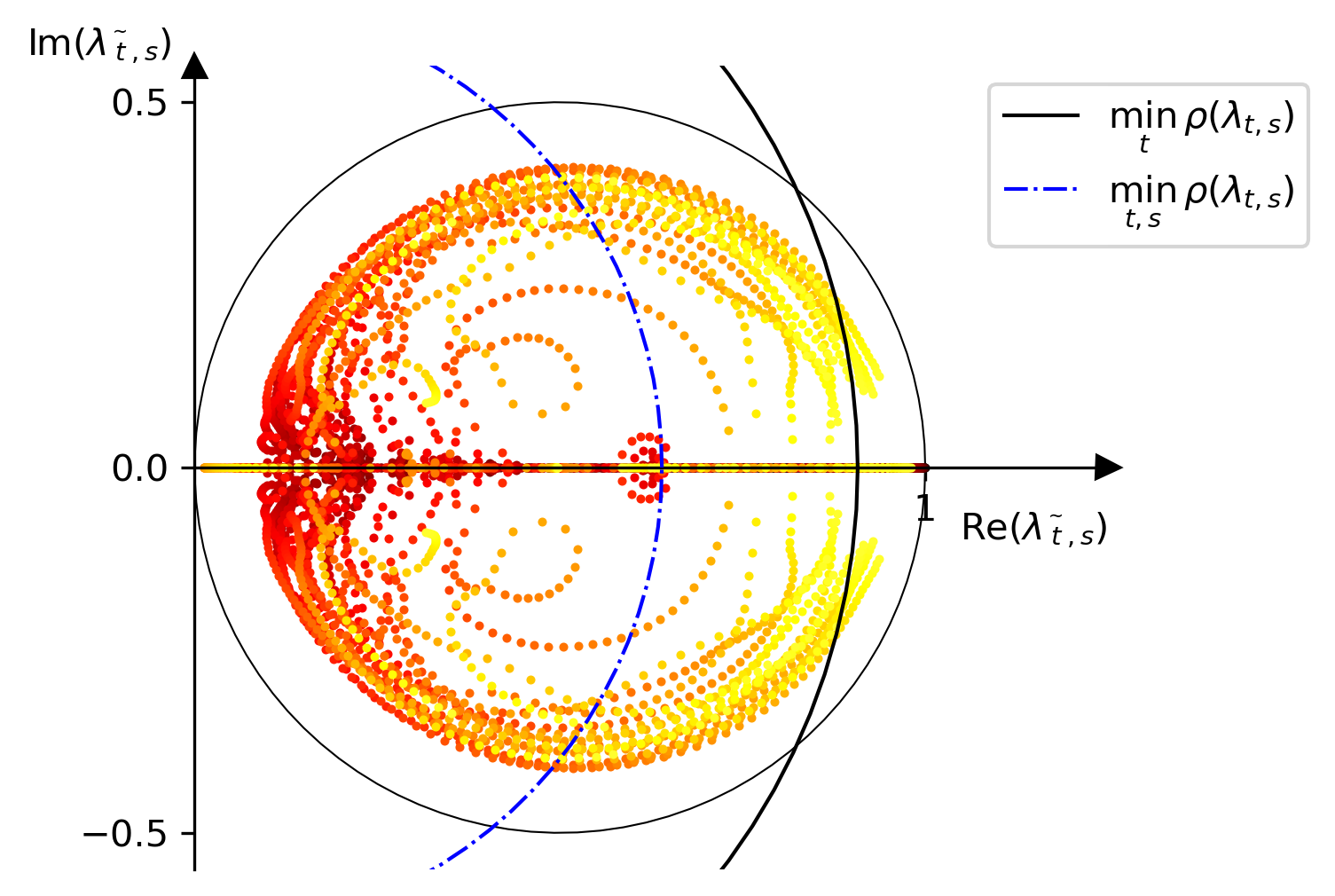}}
    \end{tabular}
    \caption{Illustration of the distribution of eigenvalues of the matrix $H_{\Delta}$ from~\eqref{linear-precond-dr} in the complex plane for two monotone matrices $A,B\in\R^{50\times 50}$ in the form \eqref{eqn:form-of-A-B} with $A_1,A_2\in\R^{25\times 25}$ positive definite and $K\in\R^{25\times 25}$ random with uniformly distributed entries, and different choices of $\Delta\in\R^{50\times 50}$ (see the code available at \url{https://github.com/j-marquardt/adaptive-stepsizes-dr} for details). (a) Eigenvalues of $H_{\Delta}$ in dependence on $t$. The black circle marks the smallest spectral radius of $H_{\Delta}$ that can be obtained in this way (same in the other figures). (b) Eigenvalues of $H_{\Delta}$ where $\tilde t$ and $\tilde s$ are chosen such that the spectral radius of $H_{\Delta}$ is as small as possible. This spectral radius is depicted in blue (same in the other figures). (c) Here we fix $\tilde s$ from figure (b) and show the eigenvalues of $H_{\Delta}$ in dependence on $t$. (d) Same as (c) but with the roles of $t$ and $s$ swapped.}
        \label{ev-circle-fig}
\end{figure}
As proven in Lemma \ref{lemma-eigenvalues} and visualised in Figure \ref{ev-circle-fig}, all eigenvalues of $H_\Delta$ lie in the circle $\mathbb B_{\frac{1}{2}}(\frac{1}{2})$. It seems as if there is no chance to minimize $\rho (H_\Delta)$ explicitly. Nonetheless, Lemma \ref{lemma-eigenvalues} allows to minimize the distance between the absolute value of the largest eigenvalue and $\frac{1}{2}$ by making $\frac{c}{1+2c}$ as large as possible. Since $c \mapsto \frac{c}{1+2c}$ is increasing for $c \geq 0$, the maximization can be accomplished by making
$$c = \frac{\re(\langle Az, z\rangle)}{\Vert z\Vert_{\Delta^{-1}}^2 + \Vert  Az\Vert_\Delta^2}$$
as large as possible and hence its denominator as small as possible.

Now suppose the operators $A,B$ and $\Delta$ to be of the following form

\begin{equation}\label{eqn:form-of-A-B}
        A := \begin{bmatrix}
        A_1 & 0\\ 
        0 & A_2^{-1}
    \end{bmatrix},
    \qquad
    B := \begin{bmatrix}
        0 & K^T\\
        -K & 0
    \end{bmatrix},
    \qquad
    \Delta := \begin{bmatrix}t \I &  0\\ 0 & s\I\end{bmatrix},
    \end{equation}

where $A_1 \in \mathbb{R}^{n\times n}$, $A_2 \in \mathbb{R}^{m\times m}$ are monotone matrices with $A_2$ invertible.

Given $z=(z_1,z_2)$ an eigenvector for $H_\Delta$, with $z_1\in \mathbb{C}^n$ and $z_2\in \mathbb{C}^m$, the value $c$ can be written as
\begin{equation*}
    c = \frac{\re(\langle Az, z\rangle)}{t^{-1}\Vert z_1\Vert^2+ s^{-1}\Vert z_2\Vert^2 + t\Vert  A_1z_1\Vert^2 + s\Vert  A_2^{-1}z_2\Vert^2}.
\end{equation*}
In order to maximize $c$, we minimize the denominator in $t$ and $s$, obtaining as solutions
\[t^*=\frac{\Vert z_1 \Vert}{\Vert A_1z_1 \Vert}, \quad s^*=\frac{\Vert z_2 \Vert}{\Vert A_2^{-1}z_2 \Vert}.\]
Following the work in \cite{adaptive-stepsizes}, this motivates an adaptive choice for the step-sizes $t_k$ and $s_k$ in algorithm \eqref{dr-iteration} as
\[t_{k+1}=\frac{\Vert x^k \Vert}{\Vert \partial f(x^k) \Vert}, \quad s_{k+1}=\frac{\Vert \tilde{y}^k \Vert}{\Vert \partial g( \tilde{y}^k) \Vert}.\]
Where, with a slight abuse of notations, $\partial f(x^k)$ and $\partial g( \tilde{y}^k)$ indicate two elements in the sets $\partial f(x^k)$ and $\partial g( \tilde{y}^k)$ respectively.
From the iterations we have a natural choice for these elements in the subdifferentials: $\frac{p^k-x^k}{t_k}\in \partial f (x^k)$ and $\frac{\tilde q^k-\tilde y^k}{s_k}\in \partial g^{*} (\tilde y^k)$, which justify the choices
\[t_{k+1}=\frac{\Vert x^k \Vert}{\Vert p^k-x^k \Vert}t_k, \quad s_{k+1}=\frac{\Vert \tilde{y}^k \Vert}{\Vert \tilde q^k - \tilde{y}^k \Vert}s_k,\]
however, this choice does not guarantee convergence of the sequences $(t_k)$ and $(s_k)$.
To enforce convergence of the algorithm we choose safeguards $0<a_t<b_t<+\infty$ and $0<a_s<b_s<+\infty$, summable sequences $(\omega_t^k)_{k\in\N}, (\omega_s^k)_{k\in\N}\in (0,1]^\N$ with $\omega_t^0=\omega_s^0=1$ and set initial guesses $t_0$ and $s_0$. The adaptive choice for the step-sizes becomes
\begin{equation}\label{adaptive-stepsizes}
    \begin{aligned}
        t_{k+1} & =\left[(1-\omega_t^k)+\omega_t^k\text{ proj}_{[a_t,b_t]}\left(\frac{\Vert x^k \Vert}{\Vert p^k-x^k \Vert}\right) \right]t_k, \\
        s_{k+1} & =\left[(1-\omega_s^k)+ \omega_s^k\text{ proj}_{[a_s,b_s]}\left(\frac{\Vert \tilde y^k \Vert}{\Vert \tilde q^k- \tilde y^k \Vert}\right)\right]s_k,
    \end{aligned}
\end{equation}
where $\text{proj}_{[a,b]}$, with $a,b\in \mathbb{R}$, denotes the projection onto $[a,b]$. As discussed in \cite{adaptive-stepsizes}, it is possible to show that the sequences generated by such choice satisfy
\begin{equation*}
    \begin{split}
        t_k& \to t > 0, \quad \sum_{k\in\N}\vert t_{k+1} - t_k\vert < \infty,\\
        s_k& \to s > 0, \quad \sum_{k\in\N}\vert s_{k+1} - s_k\vert < \infty.
    \end{split}
\end{equation*}
    This, together with Proposition \ref{final-convergence-result}, guarantees convergence for algorithm \eqref{dr-iteration}.

\begin{remark}
    In the case where $A_1$ and $A_2$ are general maximal monotone operators, the adaptive choices for the stepsizes remain the same.
\end{remark}

\begin{remark}\label{remark-experiment-1}
    The fractions in \eqref{adaptive-stepsizes} may not be defined if $p^k = x^k$ or $\tilde q^k = \tilde y^k$ and one should consider this during implementation. A possible way to do so is by applying another step with the old stepsizes, i.e. $t_{k+1}=t_k$ and $s_{k+1} = s_k$ or by setting $\frac{\Vert x^k\Vert}{\Vert p^k - x^k\Vert} = b_t$ and $\frac{\Vert \tilde y^k\Vert}{\Vert \tilde q^k - \tilde y^k\Vert} = b_s$ since these are the values of the sourrounding projections for $\Vert p^k - x^k\Vert \approx 0$, $\Vert x^k \Vert \neq 0$ and $\Vert \tilde q^k - \tilde y^k\Vert \approx 0$, $\Vert \tilde y^k \Vert \neq 0$.
    
    The occurrence of this error is not only pure theoretical, but i.e. occurs for the Experiment in Section \ref{experiment-tv}. 
\end{remark}

\begin{remark}\label{remark-experiment-2}
    The convergence of the sequences \eqref{adaptive-stepsizes} with summable distance to the limit is guaranteed by \cite[Lemma 4.1]{adaptive-stepsizes}. Nevertheless, the limits $t$ and $s$ may be so large that the sequences cause overflow errors. In order to prevent these errors we bound the sequences $(t_k)_{k\in\N}$ and $(s_k)_{k\in\N}$ by a constant from above (this will be needed in the experiment in Section \ref{experiment-tv}).
\end{remark}

\section{Experiments}\label{section-experiments}

\subsection{Constant $t$ vs adaptive $t_k$ vs adaptive $t_k$ and $s_k$}

In a first experiment we consider the problem of $\ell^1$-regularized least absolute deviations
\begin{align*}
    \min_x \Big\{ F(x) := \|Ax-b\|_1 + \lambda \|x\|_1\Big\},
\end{align*}
where $A\in\R^{m\times n}, b\in \R^m$ and $\lambda > 0$. We solve this problem using the DR iteration \eqref{dr-iteration} with $f(x) = \lambda  \Vert x\Vert_1$, $g(y) =  \Vert y - b \Vert_1$, $K=A$ and therefore
\begin{equation*}
    \begin{split}
    \prox_{t f} (x) &= \text{sign} (x) \odot \max(\vert x\vert - t \lambda, 0),\\
    \prox_{s g^*}(y) &= y - sb - \text{sign}(y-sb) \odot \max(\vert y-sb\vert - 1, 0),
    \end{split}
\end{equation*}
where $\text{sign}(\cdot)$, $\vert\cdot\vert$ and $\max(\cdot)$ are applied componentwise.

In this experiment we compare the primal-dual DR method with a constant (well chosen) stepsize, with a single adaptive stepsize $t_k$ from \cite{adaptive-stepsizes} and two adaptive stepsizes $t_k$ and $s_k$ according to the rule \eqref{adaptive-stepsizes}.

The results for the primal-dual DR method for these stepsize rules is shown in Figure \ref{fig:lad-comparison}. The constant stepsize and the $t$-adaptive stepsize show similar convergence behaviours. This is no surprise, since the stepsize sequence of the $t$-adaptive method converges to a value close to the constant stepsize $t=1.1$ quite fast. The $(t,s)$-adaptive choice for stepsizes varies more until it settles around a stable value. This behavior is related to the decay of the objective function's values. The fast decay of the $(t,s)$-adaptive method starts and outruns the decay of the other two methods once the initial variations in the stepsize sequences are overcome. 

The behavior shown in Figure \ref{fig:lad-comparison} is typical and can be observed for the majority of initializations for $A$, $b$ and $\lambda$. The larger oscillations at the beginning of the iterations appear to be common for the $(t,s)$-adaptive stepsize rule and allow both $t$ and $s$ to assume better values.

\begin{figure}[ht]
\centering
	\begin{subfigure}[t]{0.49\textwidth}
        \vskip 0pt
		\centering
		\includegraphics[width=1\linewidth]{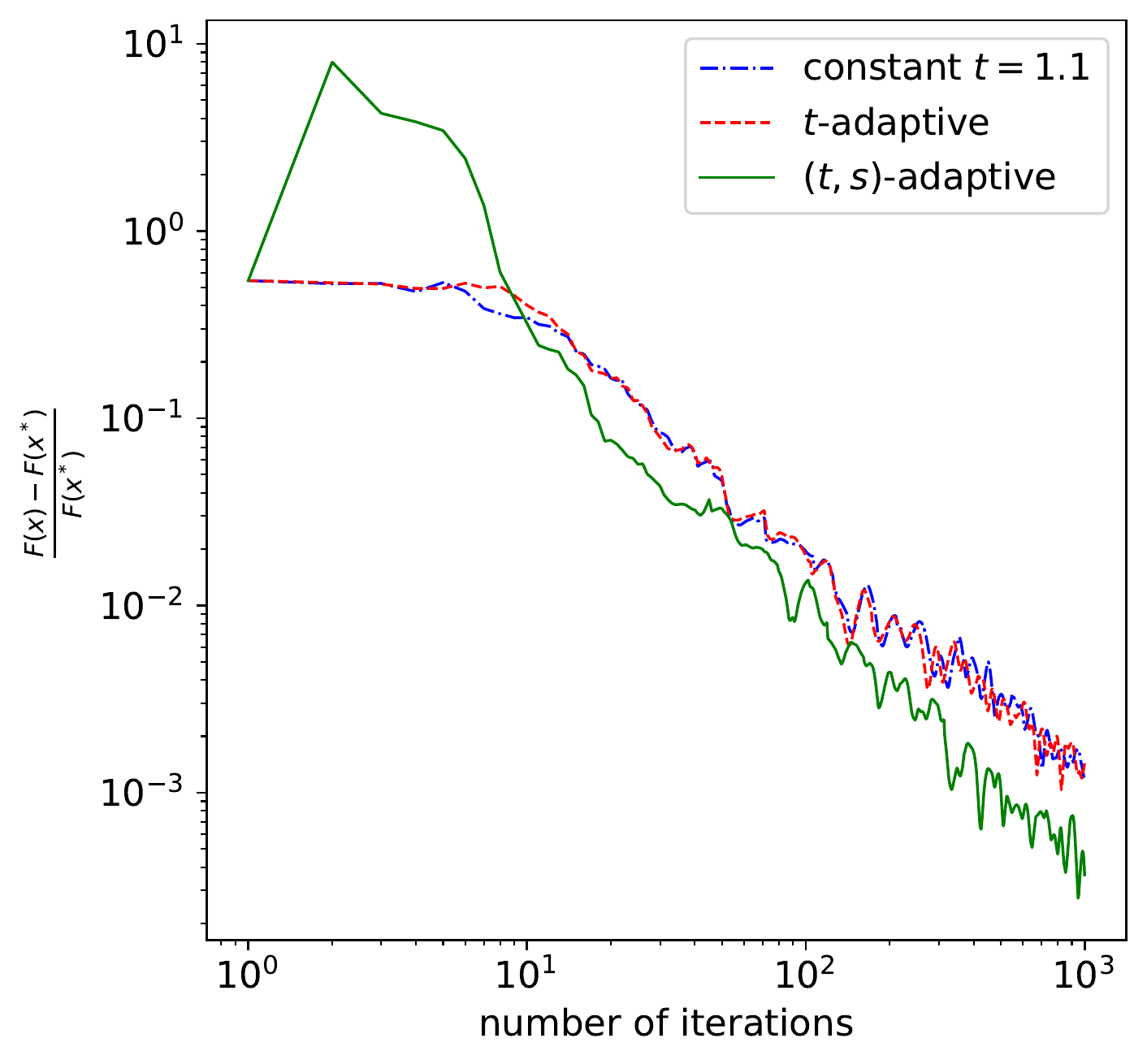}
		\caption{Decay of objective values\\\,}
	\end{subfigure}
	\hfill
	\begin{subfigure}[t]{0.49\textwidth}
        \vskip 0pt
		\centering
		\includegraphics[width=1\linewidth]{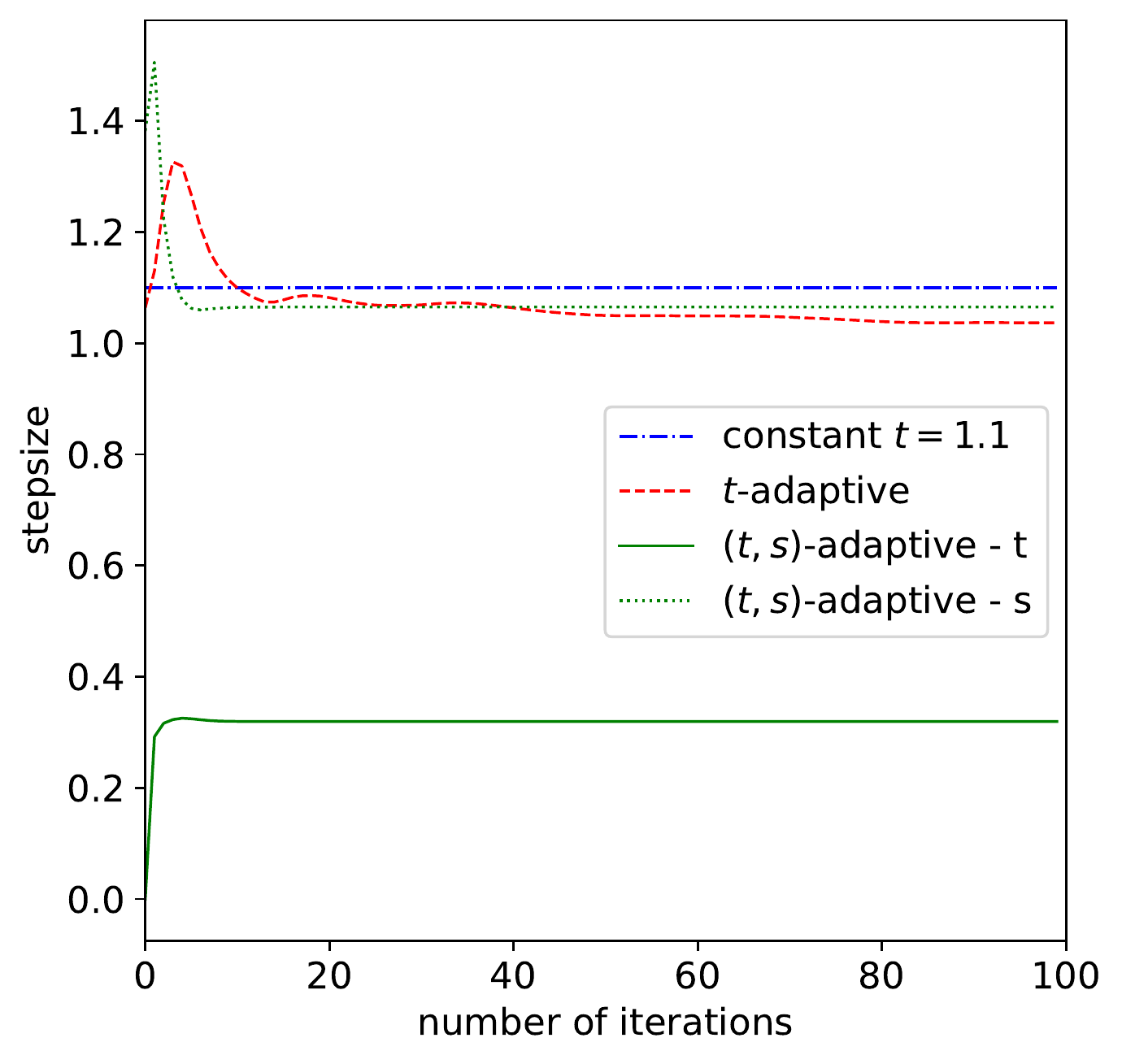}
		\caption{Development $t$-adaptive and $(t,s)$-adaptive stepsize sequences for the first $100$ timesteps.}
	\end{subfigure}
\caption{Experiment in Section 5.1 with three different choices for stepsizes sequences: only one stepsize $t=1.1$ as a good choice for a constant stepsize sequence; one single adaptive stepsize sequence $t_k=s_k$ as described in \cite{adaptive-stepsizes}; two adaptive stepsize sequences $t_k$ and $s_k$ as in \eqref{adaptive-stepsizes} with $\omega^k_t = \omega^k_s= 2^{-k}$, $a_t=a_s = 10^{-4}$, $b_t=b_s=10^4$ and $t_0=s_0=1$. The matrix $A\in\R^{200\times 100}$, $b\in \R^{200}$ have been chosen randomly, the parameter $\lambda=1$ was chosen fixed (see the code available at \url{https://github.com/j-marquardt/adaptive-stepsizes-dr} for details).}
\label{fig:lad-comparison}
\end{figure}

\subsection{Stepsizes adapt to problem parameters}\label{experiment-tv}
\begin{figure}[ht!]
\centering
	\begin{subfigure}{0.49\textwidth}
	\centering
	\includegraphics[width=1\textwidth]{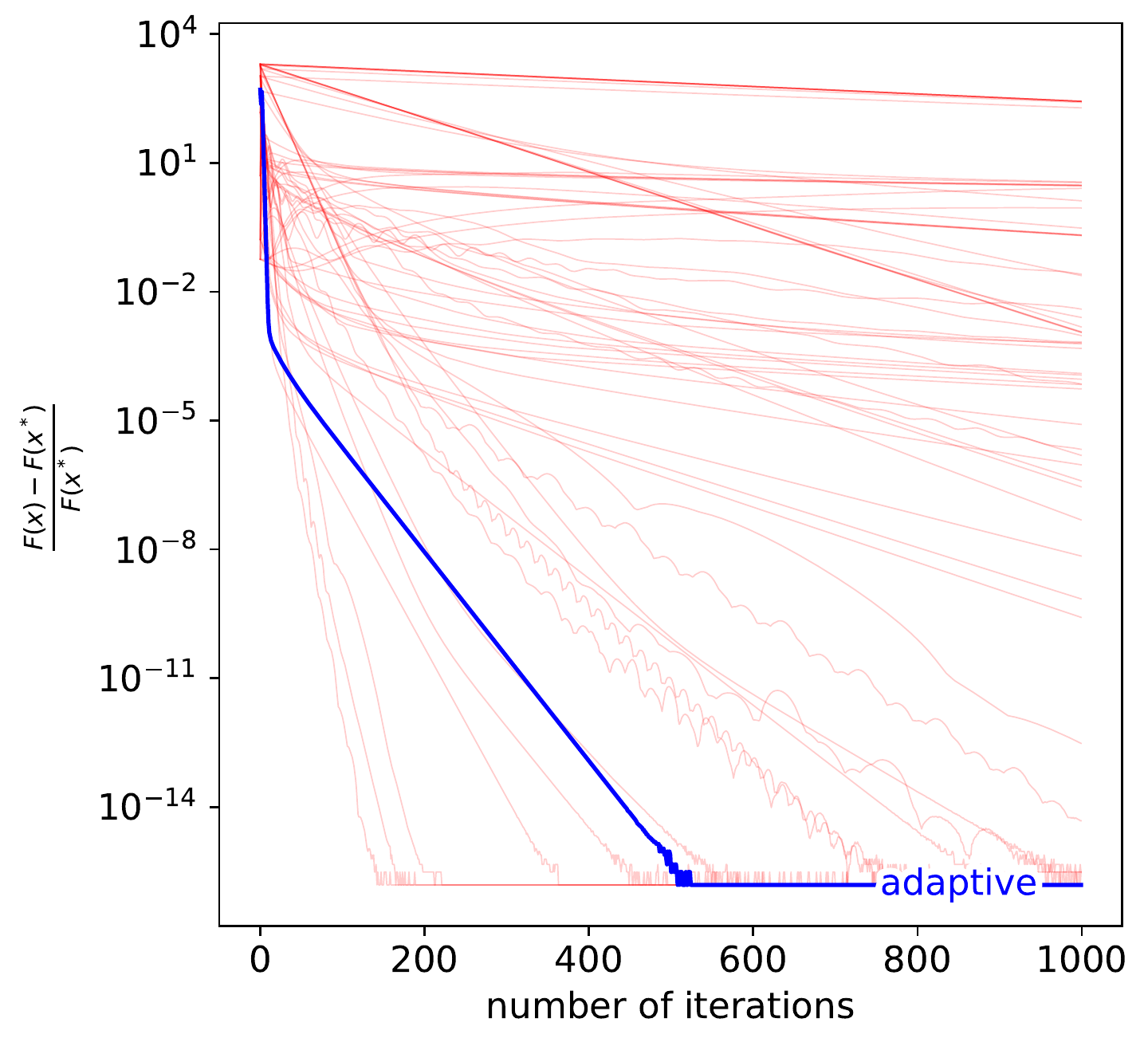}
	\caption{$\lambda=0.01,t\approx 1.55, s\approx 0.03$}
	\end{subfigure}
	\hfill
	\begin{subfigure}{0.49\textwidth}
		\centering
		\includegraphics[width=1\linewidth]{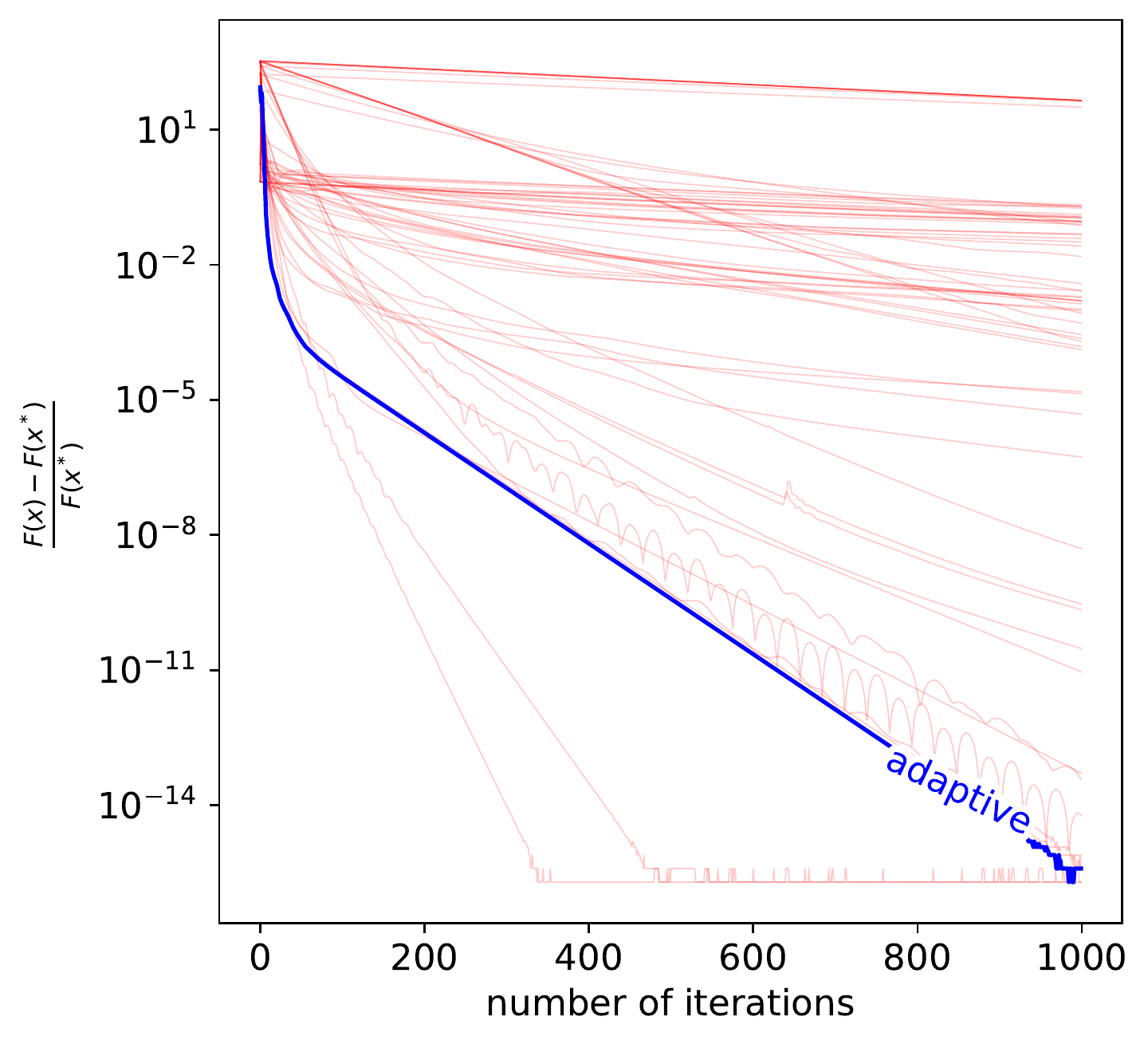}
		\caption{$\lambda=0.1,t\approx 1.38, s\approx 0.41$}
	\end{subfigure}
    \begin{subfigure}{0.49\textwidth}
		\centering
		\includegraphics[width=1\linewidth]{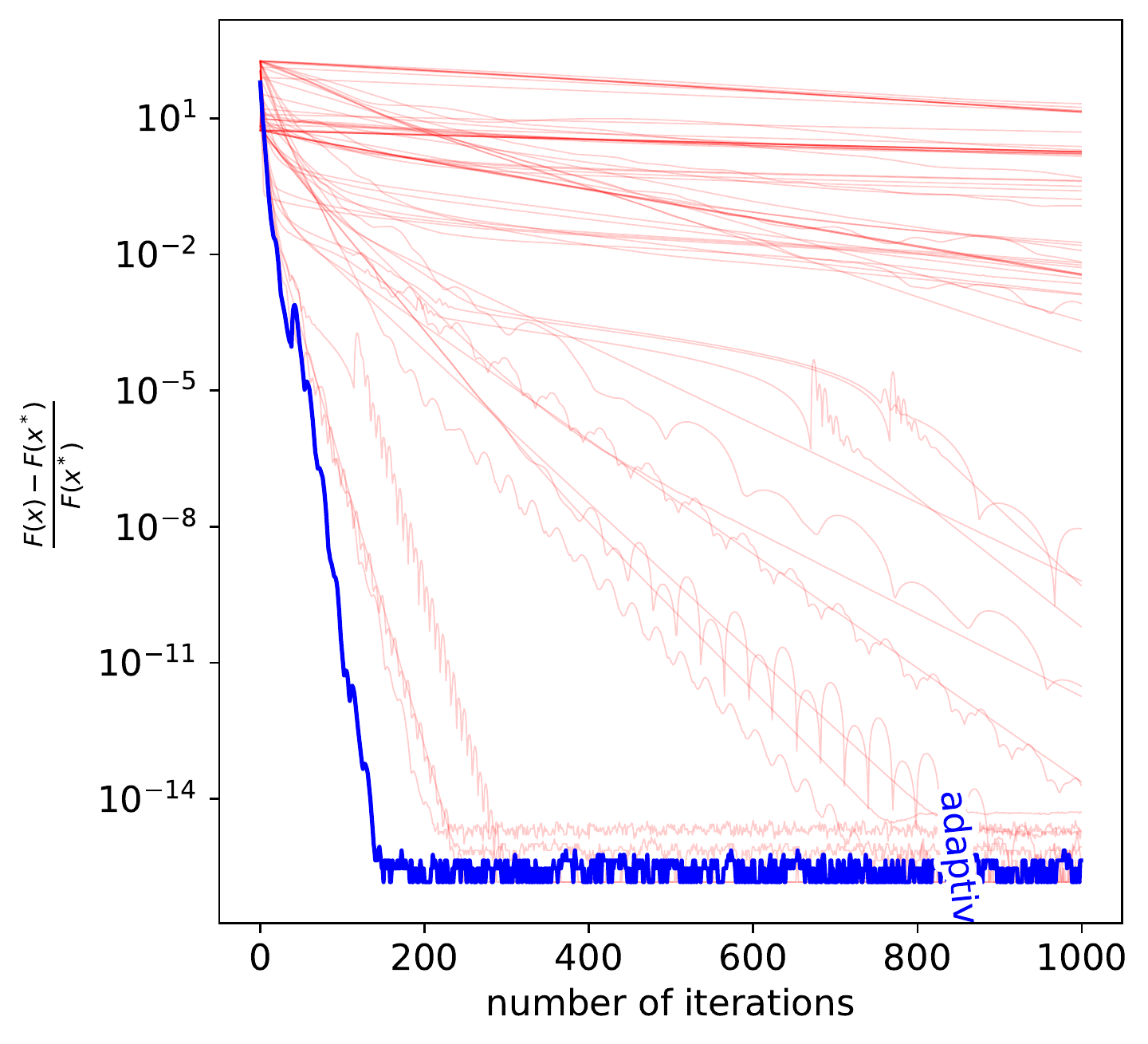}
		\caption{$\lambda=1,t\approx1.303, s\approx 5.38$}
	\end{subfigure}
    \begin{subfigure}{0.49\textwidth}
		\centering
		\includegraphics[width=1\linewidth]{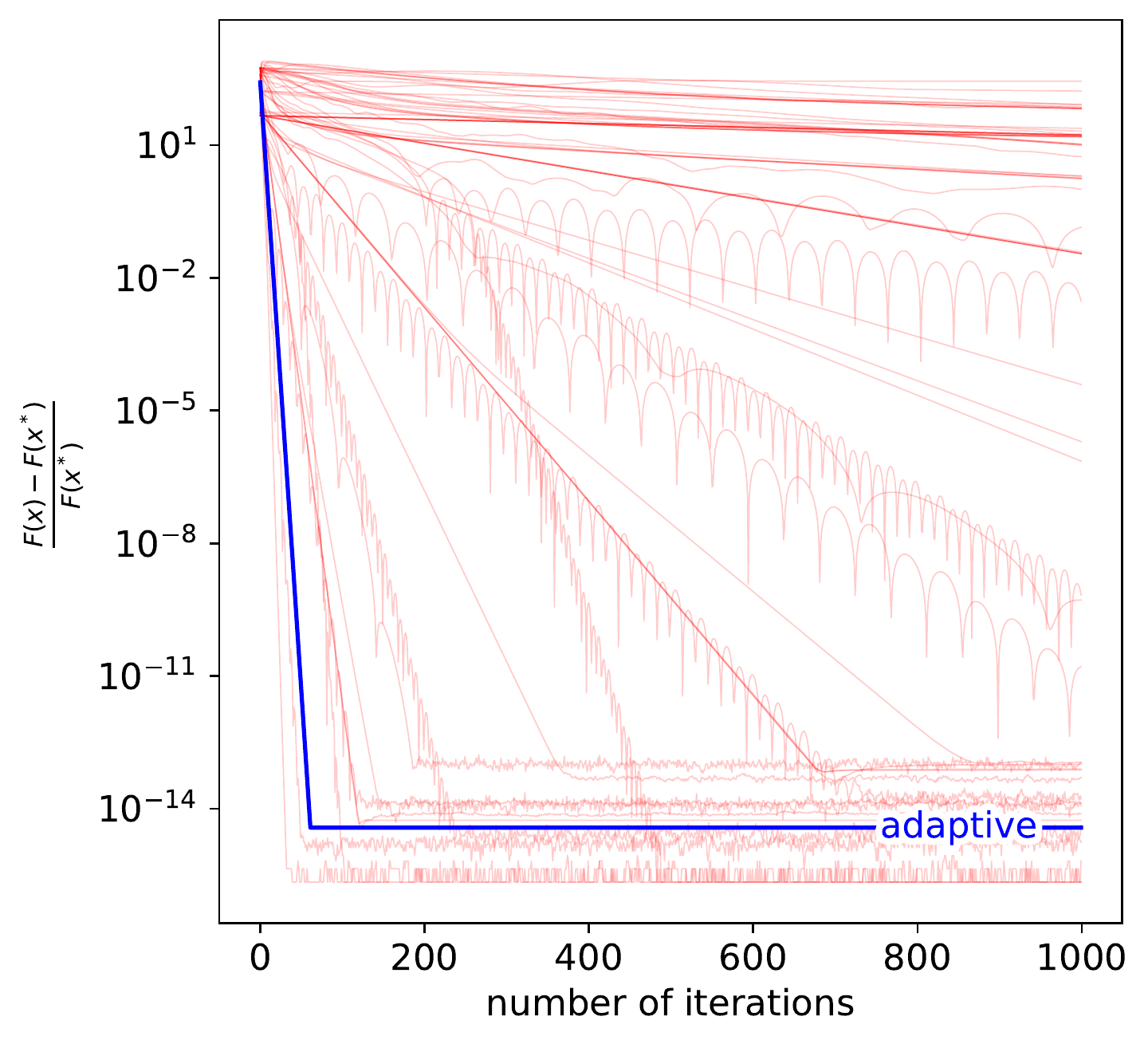}
		\caption{$\lambda=10,t\approx1.07, s= 10^{4}$}
	\end{subfigure}
 
	\caption{Results for the second experiment described in Section~\ref{experiment-tv}. Parameters $\omega_t^k = \omega_s^k = 2^{-k}, a_t = a_s = 10^{-4}, b_t = b_s = 10^4$. An upper bound for $(t_k)_{k\in\N}$ and $(s_k)_{k\in\N}$ as proposed in Remark \ref{remark-experiment-2} has been chosen as $10^4$ (see the code available at \url{https://github.com/j-marquardt/adaptive-stepsizes-dr} for details).}
	\label{fig:comparisons}
\end{figure}
For this second experiment, we consider the task of denoising. Given a noisy signal $x^\delta$ we want to generate a reconstruction using some prior knowledge. A common way to do so is to use the total variation (TV) denoising \cite{Rudin1992,Getreuer2012}. The aim is to solve the following problem
\[\min_x \Big\{F(x):= \frac{1}{2}\|x-x^\delta\|_2^2+\lambda \|Dx\|_1\Big\}\]
where $D$ is a finite differences operator. The first term in the function $F$ is called fidelity term and assures that the solution is in some sense "close" to the data $x^\delta$ (in this case, close in 2-norm), while the second term is the total variation term, our prior, and assures some penalization on the gradient of the solution and promote sparsity for it. We solve this problem using the primal-dual DR iteration \eqref{dr-iteration} with $f(x) = \frac{1}{2}\Vert x-x^{\delta}\Vert_2^2$, $g(y) = \lambda \Vert y \Vert_1$ and $K=D$. While performing such reconstructions, the role of the paramenter $\lambda$ is crucial. This parameter can depend on the type of signals we are considering and on the level of noise. Depending on $\lambda$, $x^{\delta}$ and $D$, the choice for good stepsizes could vary significantly. We want to show that, for a given $\lambda$ our proposed adaptive primal-dual DR method recovers good stepsizes. We perform four experiments, with different values for $\lambda$ and report the values for $t$ and $s$ at convergence. All four experiments are performed with the choice $t_0=1,s_0=1$ and $\omega_t^k=\omega_s^k=2^{-k}$ and confronted with iterations of primal-dual DR algorithms with constant choice of stepsizes. In particular, we choose $100$ combinations of $t$ and $s$ from $10$ choices in a log scale from $10^{-3}$ to $10^3$. Figure~\ref{fig:comparisons} shows the results of these runs with constant stepsizes in red while the result of the runs with the adaptive choice~\eqref{adaptive-stepsizes} is shown in blue. One observes that the performance of the method with constant stepsizes varies significantly (from almost no progress in 1000 iterations to convergence in a dozen of steps). The adaptive stepsize, however, is always among the fastest runs. Moreover, we see that the method~\eqref{adaptive-stepsizes} does indeed adapt the dual stepsize $s$ to the parameter $\lambda$ (the $t$ also changes slightly).

During the runs of the $(t,s)$-adaptive DR method, Remarks \ref{remark-experiment-1} and \ref{remark-experiment-2} became crucial. Especially the dual stepsize sequence $(s_k)_{k\in\N}$ triggered both the safeguard which prevents errors in case of $\tilde q^k = \tilde y^k$ and the necessity to bound $(s_k)_{k\in\N}$ by i.e. $10^4$ to prevent overflow errors in case of $\lambda = 10$. Both of them can be explained by the structure of the dual $\prox$-operator, i.e. $q= \prox_{s (\lambda \Vert\cdot\Vert_1)^*}(y) = y - \text{sign}(y) \odot \max(\vert y\vert - \lambda, 0)$, which is independent of the stepsize $s$ and equal to the identity for small arguments $y$ (but note that the dual stepsize $s$ still plays a role in the resolvent of $B$).

\section*{Acknowledgments} This work has received funding from the European Union’s Framework Programme for Research and Innovation Horizon 2020 (2014-2020) under the Marie Sk\l odowska-Curie Grant Agreement No. 861137.

\bibliographystyle{tfq}
\bibliography{references}

\end{document}